\documentclass[11pt]{article}
\usepackage{epigamath}

\usepackage[notext]{kpfonts}
\usepackage{baskervald}

\setpapertype{A4}


\usepackage[english]{babel}
\usepackage[all,cmtip]{xy}  
\usepackage{graphicx}
\usepackage{xcolor}
\usepackage{enumerate} 




\newtheorem{theorem}{Theorem}[section]
\newtheorem{question}[theorem]{Question}

\newtheorem{corollary}[theorem]{Corollary}

\newtheorem{definition}[theorem]{Definition}

\newtheorem{lemma}[theorem]{Lemma}

\newtheorem{proposition}[theorem]{Proposition}



\newcommand{\Z}{\mathbb Z}
\newcommand{\Q}{\mathbb Q}

\newcommand{\C}{\mathbb C}

\newcommand{\CP}{\mathbb P}

\newcommand{\LL}{\mathbb L}

\newcommand{\im}{\operatorname{im}}

\newcommand{\id}{\operatorname{id}}

\newcommand{\Spec}{\operatorname{Spec}}

\newcommand{\pr}{\operatorname{pr}}

\newcommand{\CH}{\operatorname{CH}}

\newcommand{\F}{\mathbb F}

\DeclareMathOperator{\spe}{sp}
 
\newcommand{\dashedlongrightarrow}{\xymatrix@1@=15pt{\ar@{-->}[r]&}}
\renewcommand{\longrightarrow}{\xymatrix@1@=15pt{\ar[r]&}}
\renewcommand{\mapsto}{\xymatrix@1@=15pt{\ar@{|->}[r]&}}
\renewcommand{\twoheadrightarrow}{\xymatrix@1@=15pt{\ar@{->>}[r]&}}
\newcommand{\hooklongrightarrow}{\xymatrix@1@=15pt{\ar@{^(->}[r]&}}
\newcommand{\congpf}{\xymatrix@1@=15pt{\ar[r]^-\sim&}}
\renewcommand{\cong}{\simeq}



\removelength{1cm}

\title{Variation of stable birational types in positive characteristic}
\titlemark{Variation of stable birational types}
\author{Stefan Schreieder}
\authoraddresses{
\authordata{Stefan Schreieder}{\firstname{Stefan} \lastname{Schreieder}\\
\institution{Mathematisches Institut, LMU M\"unchen, Theresienstr.\ 39, 80333 M\"unchen, Germany.}\\
\email{schreieder@math.lmu.de}}
}
\authormark{S. Schreieder}
\date{\vspace{-5ex}} 
\journal{\'Epijournal de G\'eom\'etrie Alg\'ebrique} 
\acceptation{Received by the Editors on August 30, 2019, and in final form on November 27, 2019. \\ Accepted on December 29, 2019.}







\usepackage{float}

\numberwithin{equation}{section}

\makeatletter

\renewcommand{\p@equation}{\arabic{section}.\arabic{equation}\expandafter\@gobble}
\makeatother

\begin{document}


\maketitle



\begin{prelims}

\vspace{-0.55cm}

\def\abstractname{Abstract}
\abstract{Let $k$ be an uncountable algebraically closed field and let $Y$ be a smooth projective $k$-variety which does not admit a decomposition of the diagonal. We prove that $Y$ is not stably birational to a very general hypersurface of any given degree and dimension. We use this to study the variation of the stable birational types of Fano hypersurfaces over fields of arbitrary characteristic. This had been initiated by Shinder \cite{Sh19}, whose method works in characteristic zero.}

\keywords{Hypersurfaces, Rationality problems, Variation of stable rationality, Decomposition of the diagonal.}

\MSCclass{14J70, 14E08, 14M20, 14D06}


\languagesection{Fran\c{c}ais}{%

\vspace{-0.05cm}
{\bf Titre. Variation du type birationnel stable en caract\'eristique positive}
\commentskip {\bf R\'esum\'e.} Soient $k$ un corps alg\'ebriquement clos non d\'enombrable et $Y$ une $k$-vari\'et\'e lisse et projective qui n'admet pas de d\'ecomposition de la diagonale. Nous montrons que $Y$ n'est pas stablement birationnelle \`a une hypersuface tr\`es g\'en\'erale de degr\'e et de dimension quelconques. Nous mettons ceci \`a profit pour \'etudier la variation du type birationnel stable des hypersurfaces de Fano sur des corps de caract\'eristique arbitraire. Ceci a \'et\'e initi\'e par Shinder \cite{Sh19}, dont la m\'ethode fonctionne en caract\'eristique nulle.}

\end{prelims}


\newpage

\setcounter{tocdepth}{1} \tableofcontents

\section{Introduction} 

Starting with a breakthrough of Voisin \cite{Voi15}, an improvement by Colliot-Th\'el\`ene and Pirutka \cite{CTP16}, and  later a further improvement by the author \cite{Sch19}, there has recently been major progress in proving that certain unirational or rationally connected varieties are not stably rational, see e.g.\ \cite{AO,bea,BB,CTP16,CTP2,HKT,HPT,HPT2,HT,KO,oka,Sch2,Sch19,Sch18,totaro,Voi15}.
In fact, in all these examples, it is shown that the variety in question does not admit a decomposition of the diagonal, which implies stable irrationality, because stably rational varieties admit such a decomposition.

Motivated by a recent result of Shinder \cite{Sh19} with an appendix by Voisin \cite{Voi19}, we prove in this paper that the non-existence of a decomposition of the diagonal does not only prevent the variety from being stably rational, but in fact from being stably birational to a very general hypersurface of any given degree and dimension.

\begin{theorem} \label{thm:hypersurface:2} 
Let $X$ and $Y$ be smooth projective varieties over an uncountable algebraically closed field $k$ of arbitrary characteristic, and let $d,n\geq 1$ be integers. 
Assume that $Y$ does not admit a decomposition of the diagonal and that $X\subset \CP^{n+1}_k$ is a hypersurface of degree $d$  which is very general with respect to $Y$.

Then $X$ and $Y$ are not stably birational to each other. 
\end{theorem}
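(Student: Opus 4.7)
The plan is to argue by contradiction: suppose that $X$ and $Y$ are stably birational, so that there is a birational equivalence between $X\times \PP^a$ and $Y\times\PP^a$ for some $a \ge 0$. A first observation is that the property of admitting a decomposition of the diagonal is invariant under stable birational equivalence of smooth projective varieties. Since $Y$ does not admit such a decomposition by hypothesis, neither does $X$; however, this alone does not yield any contradiction, and the rest of the argument must leverage the very general hypothesis on $X$.

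Next I would spread out. Writing $B$ for the parameter space of degree $d$ hypersurfaces in $\PP^{n+1}_k$ and $\mathcal{X}\to B$ for the universal family, the birational equivalence above is defined over a finitely generated subfield of $k$, and the assumption that $X$ is very general with respect to $Y$ implies, via a standard countability argument over the uncountable field $k$, that stable birational equivalence with $Y$ holds for $\mathcal{X}_b$ at every point $b$ of a Zariski-dense subset of $B$. In particular, $X$ can be connected through a one-parameter family inside $B$ to a singular special fibre $X_0 \subset \PP^{n+1}_k$, and one may arrange that $X_0$ is rational (hence stably rational) and admits a universally $\CH_0$-trivial resolution $\widetilde{X}_0 \to X_0$. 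For arbitrary $d,n \ge 1$ such an $X_0$ can be built by writing its defining equation as $x_0\, f(x_1,\ldots,x_{n+1}) + g(x_1,\ldots,x_{n+1}) = 0$ with $\deg f = d-1$ and $\deg g = d$: projection from $[1:0:\cdots:0]$ exhibits $X_0$ as birational to $\PP^n$, and for generic $f,g$ the singularities are mild enough to admit a universally $\CH_0$-trivial resolution by successive blow-ups.

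The crux is now a specialisation statement for stable birational types: if the very general fibre of a flat family over a smooth curve is stably birational to a fixed smooth projective variety $Y$, and the special fibre admits a universally $\CH_0$-trivial resolution $\widetilde{X}_0$, then $\widetilde{X}_0$ is also stably birational to $Y$. In characteristic zero this is exactly the Nicaise--Shinder theorem, proved via motivic integration; in positive characteristic the motivic approach fails, and this is the main new input to be established. The expected method is a direct Chow-theoretic argument: universal $\CH_0$-triviality of the resolution guarantees that $\CH_0$ of $\widetilde{X}_0$ base-changed to a suitable sequence of function fields is tightly controlled by $\CH_0$ of the generic fibre, which in turn encodes the stable birational type strongly enough to transfer it to the special fibre. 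This is the step I expect to be the main obstacle of the proof.

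Granting the specialisation result the contradiction is immediate: $\widetilde{X}_0$ is rational (being birational to the rational $X_0$), hence stably rational, so its stable birational equivalence with $Y$ forces $Y$ itself to be stably rational. But stably rational smooth projective varieties always admit a decomposition of the diagonal, contradicting the hypothesis on $Y$.
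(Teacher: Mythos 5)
Your outline correctly identifies the broad strategy (degenerate $X$ to a rational hypersurface, exploit universal $\CH_0$-triviality of a resolution of the special fibre, and draw a contradiction from the hypothesis on $Y$), and you are right that the crux lies in transferring information from the generic to the special fibre in a characteristic-free way. But the precise statement you isolate as the crux is not what the paper proves, and I think it is the wrong target.

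You propose to establish a \emph{specialization of stable birational types}: if the very general fibre is stably birational to $Y$ and the special fibre has a universally $\CH_0$-trivial resolution $\widetilde X_0$, then $\widetilde X_0$ is stably birational to $Y$. That is essentially the Nicaise--Shinder theorem, and it is precisely the statement whose known proofs use weak factorization and hence resolution of singularities; generalizing \emph{it} to positive characteristic is, as far as I know, open. The paper deliberately does not prove this. Instead it proves a strictly weaker (but sufficient) statement, Theorem~\ref{thm:degeneration:mixed:char:1}: if $\pi:\mathcal X\to\Spec R$ is strictly semi-stable with $X_0$ universally $\CH_0$-trivial and $\pi':\mathcal X'\to\Spec R$ is smooth, and the geometric generic fibres of $\pi$ and $\pi'$ are stably birational, then $X'_0$ \emph{admits a decomposition of the diagonal}. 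The conclusion is a purely cycle-theoretic property of $X'_0$, not a statement about its stable birational type, and that is what makes a characteristic-free proof feasible. Concretely, one passes to the discrete valuation ring $A=\mathcal O_{\mathcal X',\eta_{X'_0}}$, takes the correspondence $\Gamma$ given by the closure of the graph of a birational map $X_\eta\dashrightarrow X'_\eta$, moves the induced relative zero-cycle on $\mathcal X_A$ into the smooth locus via the Gabber--Liu--Lorenzini moving lemma, intersects with $\Gamma_A$ inside the strictly semi-stable total space $\mathcal X_A\times_A\mathcal X'_A$ using Fulton's intersection theory (Lemmas~\ref{lem:specialization} and \ref{lem:intersection:snc-divisor}), pushes forward to $\mathcal X'_A$, and specializes. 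Universal $\CH_0$-triviality of $X_0$ is then exactly what is needed to recognise the specialized cycle as a decomposition of the diagonal of $X'_0$. There is no need to control the stable birational type of $\widetilde X_0$ at all, and no appeal to weak factorization.

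Two further points where your sketch would need repair. First, the intermediate result you would need in place of the Nicaise--Shinder theorem requires a \emph{strictly semi-stable} model over a DVR, not merely a universally $\CH_0$-trivial resolution of one bad fibre; your degeneration to $\{x_0 f + g = 0\}$ is rational, but it is not clear that it admits a strictly semi-stable model without resolution of singularities. The paper instead degenerates inside a general pencil to a union of $d$ general hyperplanes, where Shinder's explicit blow-up computation (\cite[Lemma 3.6]{Sh19}) produces a strictly semi-stable model with rational (hence universally $\CH_0$-trivial) special fibre in any characteristic. Second, your final contradiction ``$Y$ is stably rational'' is stronger than, and not implied by, the paper's conclusion ``$Y$ admits a decomposition of the diagonal''; only the latter is needed to contradict the hypothesis, and only the latter is what the argument delivers.
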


Note that $X$ and $Y$ in the above theorem are not necessarily of the same dimension, and $Y$ is not assumed to be a hypersurface.
The condition that the hypersurface $X\subset \CP^{n+1}_k$ is very general with respect to $Y$ means that it lies outside a countable union of proper closed subsets of the linear series $|\mathcal O_{\CP^{n+1}_k}(d)|$, and these subsets depend on $Y$. 
For $d=1$, our theorem specializes to the aforementioned well-known fact that a smooth projective variety which does not admit a decomposition of the diagonal is stably irrational.

Theorem \ref{thm:hypersurface:2} will be deduced from a more general result, which we prove in Theorem \ref{thm:degeneration:mixed:char:1} below and which applies to a wide range of varieties other than hypersurfaces as well.
In fact,  it  applies to any variety which admits a strictly semi-stable degeneration whose special fibre has universally trivial Chow group of zero-cycles.
In the case of hypersurfaces, we  use a degeneration to a general hyperplane arrangement as in \cite{Sh19}.

The above theorem implies in particular the following.

\begin{corollary} \label{cor:hypersurface}
Let $k$ be an uncountable algebraically closed field of arbitrary characteristic.
If there is a hypersurface $Y\subset \CP^{n+1}_k$ of degree $d$ which does not admit a decomposition of the diagonal, then two very general hypersurfaces of degree $d$ in $\CP^{n+1}_k$ are not stably birational to each other.
\end{corollary}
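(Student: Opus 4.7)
My plan is to deduce Corollary~\ref{cor:hypersurface} from Theorem~\ref{thm:hypersurface:2} combined with the specialization principle for the decomposition of the diagonal. Write $H := |\OO_{\CP^{n+1}_k}(d)|$ for the linear series of degree $d$ hypersurfaces in $\CP^{n+1}_k$, and let $Y \in H$ be a smooth hypersurface without a decomposition of the diagonal, as provided by hypothesis.

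First, I would argue that the very general member $X_1 \in H$ also does not admit a decomposition of the diagonal. Recall that admitting a decomposition of the diagonal specializes in families of smooth projective varieties: if the very general fiber of a smooth projective family admits one, then so does every smooth special fiber. This goes back to Voisin and Colliot-Th\'el\`ene--Pirutka, and works in arbitrary characteristic since it only relies on the specialization of Chow groups of zero-cycles together with continuity of the obstruction as one moves in the family. Applied to the restriction of the universal family of degree $d$ hypersurfaces to the smooth locus of $H$, and using the existence of $Y$ as a smooth member without a decomposition of the diagonal, it follows that the very general $X_1 \in H$ does not admit a decomposition of the diagonal either.

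Next, fix such a very general $X_1$ and let $X_2 \in H$ be a second hypersurface which is very general with respect to $X_1$ in the sense of Theorem~\ref{thm:hypersurface:2}. Applying that theorem with the $Y$-role played by $X_1$ (smooth projective, no decomposition of the diagonal) and the $X$-role played by $X_2$ (a very general degree $d$ hypersurface in $\CP^{n+1}_k$) yields that $X_1$ and $X_2$ are not stably birational. Letting both $X_1$ and $X_2$ vary, one sees that this exhibits two very general members of $H$ which are not stably birational, as claimed.

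The only non-formal step is the specialization principle for the decomposition of the diagonal invoked in the first step. Since this is by now a well-established tool available in arbitrary characteristic, no real obstacle arises; the remainder is a direct application of Theorem~\ref{thm:hypersurface:2}, the only mildly subtle point being that the countable union of bad loci in $H$ excluded for $X_2$ depends on the choice of $X_1$, so the final statement has to be read as holding outside a countable union of proper closed subsets of $H \times H$.
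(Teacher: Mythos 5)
Your overall strategy matches the paper's: use specialization of the decomposition of the diagonal to reduce to a very general (hence smooth) member of $|\mathcal O_{\CP^{n+1}_k}(d)|$, then invoke Theorem~\ref{thm:hypersurface:2}. However, there is a genuine gap at the very first step. You write ``let $Y \in H$ be a smooth hypersurface without a decomposition of the diagonal, as provided by hypothesis,'' but the hypothesis of the corollary does \emph{not} provide a smooth $Y$: it only asserts the existence of \emph{some} hypersurface of degree $d$, possibly singular, without a decomposition of the diagonal. (Recall the paper defines decompositions of the diagonal for any pure-dimensional algebraic scheme, so this hypothesis is meaningful for singular $Y$.) Because you then restrict the universal family to the smooth locus of $H$ and state the specialization principle only ``in families of smooth projective varieties,'' your argument silently strengthens the hypothesis and leaves the general case uncovered.

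The correct version of the reduction does not require smoothness of the special fibre. The specialization map on Chow groups \cite[Proposition 11.1]{fulton} is defined for any flat proper family over a discrete valuation ring, and it implies that if the very general hypersurface admits a decomposition of the diagonal, so does any hypersurface to which it degenerates, singular or not. The contrapositive is exactly what is needed: since $Y$ (possibly singular) has no decomposition of the diagonal, neither does the very general member of $|\mathcal O_{\CP^{n+1}_k}(d)|$. This is precisely what the paper does: ``By the specialization map on Chow groups, we may assume that $Y$ is very general and so it is in particular smooth.'' Once this is fixed, the remainder of your argument---applying Theorem~\ref{thm:hypersurface:2} to $X_1$ in the $Y$-role and a second hypersurface $X_2$ very general with respect to $X_1$---is correct and agrees with the paper's intent.
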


In characteristic zero, a slightly stronger variant of Corollary \ref{cor:hypersurface} had previously been proven by Shinder \cite[Theorem 1.2]{Sh19}, who obtains the same conclusion under the assumption that $Y$ is smooth and stably irrational. 

Improving earlier results of Koll\'ar \cite{kollar} 
 and Totaro \cite{totaro}, the author showed in \cite{Sch18} that over any uncountable field $k$ of characteristic different from two, a very general hypersurface $X\subset\CP_{k}^{n+1}$ of dimension $n\geq 3$ and degree $d\geq \log_2(n)+2$ does not admit a decomposition of the diagonal and so it is not stably rational.  
By the above corollary, we thus obtain the following strengthening of that result.

\begin{corollary} \label{cor:hypersurface:char_neq_2}
Let $k$ be an uncountable field of characteristic different from two.
Then two very general hypersurfaces in $\CP^{n+1}_k$ of degree $d\geq \log_2(n)+2$ and dimension $n\geq 3$ are not stably birational to each other.
\end{corollary}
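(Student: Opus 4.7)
The plan is to combine Corollary \ref{cor:hypersurface} with the stable irrationality result from \cite{Sch18} recalled immediately before the statement, reducing to the algebraically closed case by a descent argument. Let $\bar k$ denote an algebraic closure of $k$; it is again uncountable and of characteristic different from two. By the cited result of \cite{Sch18} applied over $\bar k$, a very general hypersurface $Y\subset\CP^{n+1}_{\bar k}$ of the given degree and dimension does not admit a decomposition of the diagonal, and in particular at least one such $Y$ exists. Corollary \ref{cor:hypersurface} over $\bar k$ therefore produces a countable family $\{V_j\}_j$ of proper closed subsets of $(B\times B)_{\bar k}$, where $B:=|\mathcal O_{\CP^{n+1}_k}(d)|$, such that any pair $(X_1,X_2)\in(B\times B)(\bar k)$ avoiding $\bigcup_j V_j$ consists of hypersurfaces that are not stably birational over $\bar k$.

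The remaining step is to descend this conclusion from $\bar k$ to $k$. Each closed subset $V_j$ is already defined over some finite subextension $k_j\subset\bar k$ of $k$, say $V_j=V_j'\times_{k_j}\bar k$ for a proper closed subset $V_j'\subset(B\times B)_{k_j}$. Its image $W_j\subset B\times B$ under the finite morphism $(B\times B)_{k_j}\to B\times B$ is then a proper closed $k$-subset with $V_j\subset(W_j)_{\bar k}$. Since $k$ is uncountable and each $W_j$ is a proper closed subset of the irreducible $k$-variety $B\times B$, the set $(B\times B)(k)\setminus\bigcup_j W_j(k)$ is nonempty and Zariski dense; it is precisely the very general locus of pairs appearing in the statement. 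For any such $k$-pair $(X_1,X_2)$, the base change $(X_{1,\bar k},X_{2,\bar k})$ avoids $\bigcup_j V_j$ and is therefore not stably birational over $\bar k$. Since a stable birational equivalence over $k$ would base change to a stable birational equivalence over $\bar k$, the $k$-varieties $X_1$ and $X_2$ cannot be stably birational over $k$ either.

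The main point requiring care is the descent from $\bar k$ to $k$: one must replace the countable family of $\bar k$-closed subsets coming from Corollary \ref{cor:hypersurface} by a countable family of $k$-closed subsets whose complement among $k$-points is still nonempty. This works because each $V_j$ is Noetherian and hence defined over a finite subextension of $\bar k/k$, so its image in $B\times B$ remains of dimension strictly less than $\dim(B\times B)$; combined with uncountability of $k$, this produces an abundance of $k$-pairs avoiding all of the $W_j$. Everything else in the argument is a direct invocation of the already-established Corollary \ref{cor:hypersurface} together with the cited result of \cite{Sch18}.
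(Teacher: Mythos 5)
Your proof takes essentially the same route as the paper: pass to $\bar k$, invoke \cite[Theorem 8.1]{Sch18} to obtain a very general hypersurface without a decomposition of the diagonal, and then apply Corollary~\ref{cor:hypersurface}. The only difference is that you spell out the descent of the countable family of closed subsets from $\bar k$ to $k$ (via images under finite extensions and uncountability of $k$), which the paper condenses into the single phrase ``passing from $k$ to its algebraic closure, we may assume that $k$ is an uncountable algebraically closed field.''
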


By  \cite[Corollary 1.2]{Sch-torsion}, which generalizes the main result of \cite{Sch18} to characteristic two,  we obtain the following variant in characteristic two.

\begin{corollary} \label{cor:hypersurface:char=2}
Let $k$ be an uncountable field of characteristic  two.
Then two very general hypersurfaces in $\CP^{n+1}_k$ of degree $d\geq  \log_2 (n)+3$ are not stably birational to each other.
\end{corollary}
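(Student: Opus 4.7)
The proof should be a direct combination of the two ingredients cited in the paragraph preceding the statement, namely \cite[Corollary 1.2]{Sch-torsion} and Corollary \ref{cor:hypersurface}. The plan is as follows.

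First, I would reduce to the case where $k$ is algebraically closed. Since stable birationality is preserved under base change, it suffices to show that after base change to the algebraic closure $\bar k$, two very general hypersurfaces are not stably birational. The algebraic closure $\bar k$ is again uncountable and of characteristic two. Moreover, every proper closed subvariety of the linear series $|\mathcal O_{\CP^{n+1}_{\bar k}}(d)|$ is cut out by polynomials whose coefficients generate a finite extension of $k$, hence descends to a proper closed subvariety of $|\mathcal O_{\CP^{n+1}_k}(d)|$ via the finite base-change map. Consequently a very general $k$-hypersurface base-changes to a very general $\bar k$-hypersurface, so the result over $\bar k$ implies the result over $k$.

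Second, assuming now $k$ algebraically closed, I would invoke \cite[Corollary 1.2]{Sch-torsion}, the characteristic-two analogue of \cite{Sch18}, which guarantees that a very general hypersurface $Y \subset \CP^{n+1}_k$ of degree $d \geq \log_2(n) + 3$ (with $n \geq 3$) does not admit a decomposition of the diagonal. In particular, such a $Y$ exists, so the hypothesis of Corollary \ref{cor:hypersurface} is satisfied for these values of $d$ and $n$. Applying Corollary \ref{cor:hypersurface} then yields that two very general hypersurfaces of degree $d$ in $\CP^{n+1}_k$ are not stably birational to each other.

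There is no serious obstacle here: the argument is a one-line citation chain once the reduction to algebraically closed fields is made. The only care needed is to verify that the notion of ``very general'' is compatible with base change to $\bar k$, which is a standard consequence of the uncountability assumption on $k$ together with the fact that each closed ``bad'' locus is defined over a finitely generated (in fact finite algebraic) extension of $k$.
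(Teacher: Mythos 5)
Your proposal matches the paper's own proof exactly: pass to the algebraic closure of $k$, invoke \cite[Corollary 1.2]{Sch-torsion} to produce a very general hypersurface of degree $d \geq \log_2(n)+3$ with no decomposition of the diagonal, and then apply Corollary \ref{cor:hypersurface}. The only minor quibble is your parenthetical ``(with $n\geq 3$)'': unlike Corollary \ref{cor:hypersurface:char_neq_2}, the characteristic-two statement carries no such hypothesis, and neither does the cited reference, so that restriction should be dropped.
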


In \cite{Sh19}, Shinder  
deduced Corollary \ref{cor:hypersurface:char_neq_2}  
in the case where $\operatorname{char}(k)=0$ from \cite{Sch19} with the help of his joint work with Nicaise  \cite{NS}  on the specialization of stable birational types in characteristic zero.
In an appendix to \cite{Sh19}, Voisin  \cite{Voi19} used decompositions of the diagonal and unramified cohomology to obtain similar results in low dimensions.

Since \cite{NS}, as well as Kontsevich--Tschinkel's generalization in \cite{KT}, rely heavily on the weak factorization theorem, and hence on resolution of singularities, it is unclear how to generalize Shinder's approach to positive characteristic.
Similarly, Voisin's analysis \cite{Voi19} requires resolution of singularities of the singular hypersurfaces with unramified cohomology constructed in \cite{Sch18}, which in general is unknown in positive characteristic.
Moreover, Voisin's approach did not allow to reprove Shinder's characteristic zero result in dimensions greater than nine, because it requires the knowledge of certain unramified cohomology groups of very general hypersurfaces, which seems out of reach in high dimensions.

Our usage of decompositions of the diagonal is more direct than in \cite{Voi19} and we do in particular not use unramified cohomology.
Instead, our approach relies on a moving lemma of Gabber, Liu and Lorenzini \cite[Theorem 2.3]{GLL} and intersection theory on strictly semi-stable schemes using Fulton's work \cite{fulton}. 
While we use a degeneration to a general hyperplane arrangement as in \cite{Sh19}, our approach does not rely on the weak factorization theorem or Hironaka's resolution of singularities and does in particular not use the results in \cite{NS} and \cite{KT}. 
 
While the above results are formulated over uncountable fields,  Theorem \ref{thm:degeneration:mixed:char:1} below together with \cite{totaro} and \cite{Sch18} also allows to produce explicit examples of Fano hypersurfaces $X$ and $X'$  over small fields $k$ (e.g.\ $k=\Q(t)$ or $k=\F_p(t,s)$) which over the algebraic closure of $k$ are neither stably rational, nor stably birational to each other.
If $k$ is not of characteristic two, the slopes $\frac{\deg X}{\dim X+1}$ of these examples may be chosen arbitrarily small, see Theorem \ref{thm:ex:Q(t)} below.

It is worth to compare the results on variation of stable birational types in \cite{Sh19} and the present paper with the concept of birational rigidity (see \cite{Kol18} for a recent survey), which allows to prove that for certain classes of smooth projective Fano varieties of Picard rank one, any birational equivalence is an isomorphism.
In particular, in these cases the birational types vary as much as possible.  
By a result of de Fernex \cite{deF},  
this applies for instance to smooth projective hypersurfaces $X\subset \CP^{n+1}_\C$ of degree $d=n+1$ and dimension $n\geq 3$. 
The corresponding result in positive characteristic is open.
Birational rigidity has so far mostly been applied to Fano varieties of index one and two (see e.g.\ \cite{Pu16,Pu18} for the index two case) and it is unknown whether the method applies to hypersurfaces $X\subset \CP^{n+1}_\C$ of degree $d\ll n$.
Finally, the condition on the Picard rank seems to prevent applications to questions about stable birational equivalence.

\section{Notations}

All schemes are separated.
An algebraic scheme is a scheme of finite type over a field.
A variety is an integral algebraic scheme. 
If $k$ is an uncountable field, a very general point of a $k$-variety $X$ is a closed point outside a countable union of proper closed subsets.

Let $R$ be a ring and let $X$ be an $R$-scheme and $f$ be a morphism of $R$-schemes.
Then for any ring extension $R\subset A$, we denote the base change of $X$ and $f$ to $A$ by $X_A$ and $f_A$, respectively.

We denote the Chow group of algebraic cycles of dimension $r$, resp.\ codimension $r$, modulo rational equivalence on an algebraic scheme $X$ by $\CH_r(X)$, resp.\ $\CH^r(X)$.
For a standard reference on Chow groups, see \cite{fulton}.
For a codimension $r$ cycle $\gamma \in Z^r(X)$, we denote by $[\gamma]\in \CH^r(X)$ its class in the Chow group and by $|\gamma|$ its support, which is a reduced closed subscheme of pure codimension $r$ in $X$.
We say that two cycles $\gamma\in Z^a(X)$ and $\gamma'\in Z^b(X)$ meet properly if $|\gamma|\cap |\gamma'|$ has the expected codimension $a+b$ in $X$.
We use the same notation for Chow groups and cycles in the more general setting where $X$ is only a scheme of finite type over a regular base scheme $S$, replacing the dimension and codimension by an appropriate relative notion, see \cite[Chapter 20]{fulton}.

An algebraic scheme $X$ of pure dimension $n$ over a field $k$ admits a decomposition of the diagonal if 
$$
[\Delta_X]=[X\times z]+[Z_X]\in \CH_n(X\times_k X) ,
$$
where $z$ is a zero-cycle on $X$ and $Z_X$ is a cycle on $X\times_k X$ which does not dominate the first factor.
If $X$ is integral with function field $K=k(X)$, then this is equivalent to 
$$
[\delta_X]=[z_{K}]\in \CH_0(X_{K}), 
$$
where $z$ is the zero-cycle on $X$ from above 
and $\delta_X$ denotes the zero-cycle on $X_K$ that is induced by the diagonal $\Delta_X$.
If $X$ is a proper variety that is stably rational (\emph{i.e.}\ $X\times \CP^m$ is rational for some $m\geq 0$), then it admits a decomposition of the diagonal, see \cite[Lemme 1.5]{CTP16} for the case where $X$ is smooth and \cite[Lemma 2.4]{Sch18} in general.

We say that a proper algebraic scheme $X$ over a field $k$ has universally trivial Chow group of zero-cycles, if the degree map $\deg:\CH_0(X_L)\to \Z$ is an isomorphism for any field extension $L/k$.
If $X$ is geometrically integral and smooth over $k$, then this is equivalent to the fact that $X$ admits a decomposition of the diagonal, see e.g.\ \cite[Proposition 1.4]{CTP16}.


Let $R$ be a discrete valuation ring with residue field $k$ and fraction field $K$.
For an $R$-scheme $\mathcal X$, we denote by $X_0:=\mathcal X\times_R k$ the special fibre and by $X_\eta:=\mathcal X\times_R K$ and $X_{\overline \eta}:=\mathcal X\times_R \overline K$ the generic and the geometric generic fibres of $\mathcal X\to \Spec R$, respectively. 
If $\mathcal X$ is proper and flat over $R$, we say that $X_\eta$ degenerates or specializes to $X_0$.
If additionally $k$ is algebraically closed, we also say that for any field extension $L/K$, the base change $X_\eta\times_K L$ (e.g.\ $X_{\overline \eta}$) degenerates or specializes to $X_0$, cf.\ \cite[Section 2.2]{Sch19}.

We recall the definition of strictly semi-stable $R$-schemes, which will be particularly important for us, see e.g.\ \cite[Definition 1.1]{hartl}.

\begin{definition}
Let $R$ be a discrete valuation ring and let $\pi:\mathcal X\to \Spec R$ be a proper flat morphism with $\mathcal X$ integral.
The $R$-scheme $\mathcal X$ (or the morphism $\pi$) is called strictly semi-stable, if the special fibre $X_0$ is a geometrically reduced simple normal crossing divisor on $\mathcal X$, \emph{i.e.}\ the irreducible components of $X_0$ are smooth Cartier divisors on $\mathcal X$ and the scheme-theoretic intersection of $r$ different components of $X_0$ is either empty or smooth and equi-dimensional of codimension $r$ in $\mathcal X$.
\end{definition}

The generic fibre $X_\eta$ of a strictly semi-stable $R$-scheme is automatically smooth, because we assumed $\pi$ to be proper in the above definition.
Moreover, the total space $\mathcal X$ is regular, because its generic fibre is smooth and each component of the special fibre is a smooth Cartier divisor, see \cite[Remarks 1.1.1 and 1.1.2]{hartl}.

The following lemma is an immediate consequence of  \cite[Proposition 1.3]{hartl}.

\begin{lemma} \label{lem:ss}
Let $R$ be a discrete valuation ring and let $\mathcal X$ be a strictly semi-stable $R$-scheme and $\mathcal X'$ be a smooth and proper $R$-scheme.
Then, $\mathcal X\times_R\mathcal X'$ is also a strictly semi-stable $R$-scheme.
Moreover, if $A$ denotes the local ring of $\mathcal X'$ at a generic point of $X'_0$, then the base change $\mathcal X_A:=\mathcal X\times_R \Spec A$ is a strictly semi-stable $A$-scheme.
\end{lemma}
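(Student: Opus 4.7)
The plan is to unpack the definition of strict semi-stability directly and verify each axiom from the interplay between the normal-crossings structure on $\mathcal{X}/R$ and the smoothness of $\mathcal{X}'/R$. The key observation is that each irreducible component $V_i\subset X_0$ satisfies $V_i\times_R \mathcal{X}'=V_i\times_k X_0'$, because $V_i$ is already a $k$-scheme, and similarly for its pullback along $\Spec A\to\Spec R$.

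For the first statement, I would proceed in steps: (i) identify the special fibre of $\mathcal{X}\times_R \mathcal{X}'$ with $X_0\times_k X_0'$; (ii) identify its irreducible components with the $V_i\times_k X_0'$ (passing to connected components of $X_0'$ first, if necessary, so that each resulting piece is irreducible); (iii) check that each such piece is a smooth Cartier divisor on $\mathcal{X}\times_R \mathcal{X}'$, which follows because $V_i$ is a smooth Cartier divisor on $\mathcal{X}$ and flat base change along $\mathcal{X}'/R$ preserves both properties, together with smoothness of $X_0'/k$; (iv) verify that the scheme-theoretic $r$-fold intersection $(V_{i_1}\cap\dots\cap V_{i_r})\times_k X_0'$ is either empty or smooth and equidimensional of codimension $r$, by transporting the corresponding property from $\mathcal{X}$; and (v) deduce geometric reducedness of $X_0\times_k X_0'$ from geometric reducedness of $X_0$ together with smoothness of $X_0'/k$.

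For the second statement, I would argue that $A=\OO_{\mathcal{X}',\xi}$ is a DVR for which the uniformizer $\pi\in R$ remains a uniformizer, so that $\Spec A\to\Spec R$ is a flat morphism of DVRs sending uniformizer to uniformizer, and then apply a verification analogous to steps (i)--(v) above: the ring $A$ is regular of dimension one because $\mathcal{X}'$ is regular (being smooth over the regular base $R$) and $\xi$ is a codimension-one point; smoothness of $\mathcal{X}'/R$ forces $X_0'$ to appear with multiplicity one at $\xi$, so that $\pi$ is a uniformizer of $A$; the pulled-back components $V_i\times_k\kappa(\xi)$ are smooth Cartier divisors on $\mathcal{X}_A$ with $r$-fold intersections $(V_{i_1}\cap\dots\cap V_{i_r})\times_k\kappa(\xi)$ of the correct codimension; and the special fibre $X_0\times_k\kappa(\xi)$ inherits geometric reducedness.

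I do not expect a substantial obstacle, since the lemma is essentially a packaging of Hartl's Proposition 1.3 about how strictly semi-stable structures behave under the relevant base changes. The only delicate points are checking that the total space remains integral -- in the second statement this is automatic from flatness plus integrality of the generic and special fibres of $\mathcal{X}_A\to\Spec A$, and in the first statement one either imposes geometric connectedness on the fibres of $\mathcal{X}'/R$ or applies the lemma to each connected component of $\mathcal{X}'$ separately -- and confirming that flat base change along $\mathcal{X}'/R$ indeed preserves the Cartier-divisor and smoothness properties of the $V_i$, which is standard.
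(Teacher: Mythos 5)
Your direct verification of the axioms of strict semi-stability is sound and takes a genuinely different route from the paper, which simply records the lemma as an ``immediate consequence'' of Hartl's Proposition~1.3 in \emph{Semi-stability and base change} (Arch.\ Math.\ \textbf{77} (2001)), with no further argument. That proposition packages exactly the flat-base-change stability you unwind by hand, so the two approaches differ in presentation rather than content: the citation buys brevity, whereas your steps (i)--(v) and the DVR analysis of $A=\OO_{\mathcal X',\xi}$ make explicit where flatness of $\mathcal X'/R$, smoothness of $X'_0/k$, and the residue-field extension $k\hookrightarrow\kappa(\xi)$ are used. One small inaccuracy in the second half: the special fibre $X_0\times_k\kappa(\xi)$ of $\mathcal X_A\to\Spec A$ is \emph{not} integral in general (it has as many irreducible components as $X_0$ does), so integrality of $\mathcal X_A$ does not follow from ``integrality of the generic and special fibres''; what one actually uses is that $\mathcal X_A$ is flat over the DVR $A$ with reduced special fibre (giving reducedness via $R_0+S_1$) and geometrically integral generic fibre (giving irreducibility, since every component of a flat scheme over a DVR dominates the base). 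The geometric-connectedness caveats you flag --- needed so that $V_i\times_k X_0'$ and $V_i\times_k\kappa(\xi)$ are irreducible and so that $\mathcal X\times_R\mathcal X'$ is integral --- are indeed suppressed in the lemma as stated, but are harmless in the paper's applications, where $k$ is algebraically closed and the fibres of $\pi$ and $\pi'$ are assumed geometrically connected.
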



\section{Intersection theory on strictly semi-stable schemes}

Let $R$ be a discrete valuation ring or a field and let $\pi:\mathcal X\to \Spec R$ be a separated scheme of finite type over $R$. 
Let further $\gamma\in Z^a(\mathcal X)$ and $\gamma'\in Z^b(\mathcal X)$ be cycles whose (set-theoretic) intersection lies in the smooth locus $U\subset \mathcal X$  of $\pi$.
Then Fulton defines the intersection $[\gamma]\cdot [\gamma']$ as a class in the Chow group of $|\gamma|\cap |\gamma'|$, see \cite[Section 20.2]{fulton}.
In particular, if $\gamma$ and $\gamma'$ meet properly, then the intersection $\gamma\cdot \gamma'$  is defined on the level of cycles and Fulton proves that the corresponding class in the Chow group of $U$ depends only on the rational equivalence classes of $\gamma$ and $\gamma'$ on $U$.

A cycle $\gamma$
on $\mathcal{X}$ is said to be flat over $R$ if its support
(as a reduced scheme) is flat over $R$. 
This is equivalent
to saying that no component of $\gamma$ is contained in the special fibre. 

Recall that in general there is no intersection product on the Chow group of singular varieties.
For us, it will  however be important to compute the intersection of certain cycles on strictly semi-stable $R$-schemes and in particular on their special fibres, which are not necessarily smooth and it will not be enough for us to know the corresponding identities on the smooth locus.
In the following two subsections, we deduce two auxiliary results from Fulton's theory, which allow us to overcome this difficulty.

\subsection{Compatibility of specialization and products in a non-smooth setting}

Let $R$ be a discrete valuation ring and let $\pi:\mathcal X\to \Spec R$ be a 
flat morphism. 
Taking the closure of a cycle on $X_\eta$ and restricting that cycle to $X_0$ yields a well-defined specialization map
$$
\spe:\CH^\ast(X_\eta)\to \CH^\ast(X_0),
$$
see \cite[Section 20.3]{fulton}.
If $\pi$ is smooth, this specialization map commutes with the product structure on both sides.
In the following we discuss a variant of this compatibility result in a situation where $\pi$ is not necessarily smooth.

Let $\gamma\in Z^a(\mathcal X)$ and $\gamma'\in Z^{b}(\mathcal X)$ be cycles which meet properly in the smooth locus of $\pi$ and such that  $|\gamma|\cap |\gamma'|$ is flat over $R$. 
This implies that the cycles $\gamma_0$ and $\gamma_0'$ on $X_0$ meet properly in the smooth locus of $X_0$.
Hence, their intersection
$$
\gamma_0\cdot \gamma_0' \in Z^{a+b}(X_0)
$$
is defined on the level of cycles. 

\begin{lemma} \label{lem:specialization}
In the above notation, we have
$$
\spe([\gamma_\eta] \cdot [\gamma'_\eta])= [\gamma_0 \cdot \gamma'_0] \in \CH^{a+b}(X_0).
$$  
\end{lemma}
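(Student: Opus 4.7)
The plan is to reduce the statement to the standard compatibility of specialization with intersection products on a smooth $R$-scheme, which Fulton establishes in \cite[Chapter 20]{fulton}. The relevant smooth $R$-scheme is the smooth locus $U \subset \mathcal X$ of $\pi$, which by assumption contains the set-theoretic intersection $W := |\gamma|\cap |\gamma'|$. Since only components of $\gamma$ and $\gamma'$ that actually meet the other cycle can contribute to either side of the desired identity, I would first discard any non-meeting components; this alters neither the intersection cycle nor any of the Chow classes that appear in the equation.

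After this reduction, every component of $\gamma$ and of $\gamma'$ meets $W\subset U$, so I can work with the restrictions $\gamma|_U$ and $\gamma'|_U$, where $\pi$ is smooth. The flatness of $W$ over $R$, together with the (implicit) hypothesis that $\gamma_0$ and $\gamma_0'$ are well-defined cycles on $X_0$, guarantees that after the reduction $\gamma$ and $\gamma'$ are $R$-flat as cycles on $U$. In particular, the intersection cycle $\gamma \cdot \gamma' \in Z^{a+b}(U)$ is well-defined, is $R$-flat with support $W$, and its fibrewise restrictions to $U_\eta$ and $U_0$ agree with $\gamma_\eta \cdot \gamma'_\eta$ and $\gamma_0 \cdot \gamma'_0$ respectively.

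Since $U \to \Spec R$ is smooth, Fulton's formalism gives both that $\spe[\alpha_\eta] = [\alpha_0]$ for any $R$-flat cycle $\alpha$ on $U$, and that specialization commutes with the intersection product on $U$. Applying the former to the flat cycle $\gamma \cdot \gamma'$ and expanding via the latter yields
$$
\spe([\gamma_\eta]\cdot[\gamma'_\eta]) \;=\; \spe\bigl[(\gamma\cdot\gamma')_\eta\bigr] \;=\; \bigl[(\gamma\cdot\gamma')_0\bigr] \;=\; [\gamma_0\cdot\gamma'_0]
$$
in $\CH^{a+b}(U_0)$, and pushing forward along the open immersion $U_0 \hookrightarrow X_0$ (which is harmless since the intersection cycle is already supported on $W_0 \subset U_0$) gives the claimed identity. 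I expect the main obstacle to be the careful justification of the reduction step, specifically verifying that discarding components of $\gamma$ and $\gamma'$ that do not meet the other cycle preserves the refined intersection class on the generic fibre and its specialization; once this bookkeeping is in place, the conclusion is a direct application of Fulton's standard machinery on smooth $R$-schemes.
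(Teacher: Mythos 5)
Your argument follows essentially the same route as the paper's proof: both observe that the cycle-level intersection $\gamma\cdot\gamma'$ is flat over $R$ with support $W=|\gamma|\cap|\gamma'|$, and both invoke Fulton's compatibility of the specialization map with refined intersection products on the smooth locus $U$. The preliminary ``reduction'' step (discarding components of $\gamma$ or $\gamma'$ that do not meet the other cycle) is extra bookkeeping that the paper does not need — the flatness of $W$ together with the proper-intersection hypothesis already ensures that $\gamma\cdot\gamma'$ is a flat cycle, which is all that is required, and your justification for why the reduction makes $\gamma$, $\gamma'$ themselves $R$-flat is somewhat vague (it is true, but follows from a small dimension count that you don't spell out). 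The one place where your write-up is looser than the paper's is the final equality $[(\gamma\cdot\gamma')_0]=[\gamma_0\cdot\gamma'_0]$: you assert that the fibrewise restrictions ``agree'' as cycles, but this cycle-level identity is precisely what needs the commutativity of specialization with the refined product (which you mention but never visibly use in your displayed chain), followed by the observation that since $\gamma_0$ and $\gamma'_0$ meet properly and both sides are top-dimensional cycles supported on $|\gamma_0|\cap|\gamma'_0|$, equality of Chow classes forces equality of cycles. That last step is how the paper closes the argument; your proposal gestures at the right facts but does not quite wire them together in the display.
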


\begin{proof}
Since $\gamma$ and $\gamma'$ meet properly in the smooth locus of $\pi$, we can define their intersection on the level of cycles
$$
\gamma\cdot \gamma'\in Z^{a+b}(\mathcal X) .
$$
By the construction of the specialization homomorphism on Chow groups, 
\begin{align} \label{eq:sp}
\spe([\gamma_\eta] \cdot [\gamma'_\eta])= [(\gamma\cdot \gamma')_0],
\end{align}
where $(\gamma\cdot \gamma')_0\in Z^{a+b}(X_0)$ denotes the restriction of the cycle $\gamma\cdot \gamma'$ to $X_0$.

We claim that
\begin{align} \label{eq:lem}
(\gamma\cdot \gamma')_0= \gamma_0 \cdot \gamma'_0 \in Z^{a+b}(X_0) .
\end{align}
To see this, note first that $|\gamma_0|\cap |\gamma_0'|$ is contained in the smooth locus of $X_0$.
Recall next that for any smooth $R$-scheme $U$, and cycles $\alpha$ and $\beta$ on $U$, Fulton defines an intersection product $\alpha\cdot \beta$ as a class in the Chow group of $|\alpha|\cap | \beta|$ and this construction commutes with specialization, see \cite[Section 20.3]{fulton}.
Applying this to the smooth locus $U\subset \mathcal X$ of $\pi$, we find that 
$$
[(\gamma\cdot \gamma')_0]= [\gamma_0] \cdot [\gamma'_0] \in \CH^{\ast} (|\gamma_0|\cap |\gamma_0'|).
$$
Since $\gamma_0$ and $\gamma_0'$ meet properly, the above equality has to hold already on the level of cycles and so (\ref{eq:lem}) follows.

The lemma follows now from (\ref{eq:sp}) and (\ref{eq:lem}).
\end{proof}

\subsection{Intersection theory on the special fibre} 
Let $R$ be a discrete valuation ring 
and let $\pi:\mathcal X\to \Spec R$ be a strictly semi-stable $R$-scheme.
Let us denote by $X_{0i},X_{0j}\subset X_0$ irreducible components of the special fibre of $\pi$ and assume that $X_{0ij}:=X_{0i}\cap X_{0j}$ is non-empty.
Consider the following diagram, where all morphisms are given by the natural inclusions:
$$
\xymatrix{&X_{0i} \ar[rd]^{\epsilon_i} \ar[r]^{\epsilon_i^0} & X_0 \ar[d]^{\iota} \\
X_{0ij} \ar[ru]^{\epsilon^i_{ij}} \ar[rd]_{\epsilon^j_{ij}} \ar[rr]^{\epsilon_{ij}} &         &\mathcal X \\
 & X_{0j} \ar[ru]_{\epsilon_j} \ar[r]_{\epsilon_j^0} &  X_0 \ar[u]_{\iota}}
$$

Let $\beta\in Z^{b}(\mathcal X)$ be a cycle that is flat over $R$.
Since $\mathcal X$ is semi-stable over $R$, $X_{0i}$ and $X_{0j}$ are Cartier divisors on $\mathcal X$, and $X_{0ij}$ is a Cartier divisor on $X_{0i}$, as well as on $X_{0j}$.
Hence, $\epsilon_{ij}$, $\epsilon_i$ and $\epsilon_j$ are regular embeddings and so the pullbacks
$$
(\epsilon_{ij})^\ast [\beta]\in \CH^b(X_{0ij}),\ \ (\epsilon_i)^\ast [\beta] \in \CH^b(X_{0i}) \ \ \text{and}\ \ (\epsilon_j)^\ast[\beta]\in \CH^b(X_{0j})
$$
are well-defined on the level of Chow groups, see \cite[Section 20.1]{fulton}.
For any closed subvariety $Z\subset X_{ij}$, the projection formula thus shows
$$
(\epsilon_{ij}^i)_\ast ([Z]\cdot (\epsilon_{ij})^\ast[\beta])=(\epsilon_{ij}^i)_\ast [Z]\cdot (\epsilon_{i})^\ast[\beta] \ \ \text{and}\ \ 
(\epsilon_{ij}^j)_\ast ( [Z]\cdot (\epsilon_{ij})^\ast[\beta])=(\epsilon_{ij}^j)_\ast [Z] \cdot (\epsilon_{j})^\ast[\beta] .
$$
Applying the pushforwards $(\epsilon^0_i)_\ast$ and $(\epsilon^0_j)_\ast$, respectively, this implies 
\begin{align} \label{eq:epsilon}
(\epsilon^0_i)_\ast ((\epsilon_{ij}^i)_\ast [Z]\cdot (\epsilon_{i})^\ast[\beta])
= (\epsilon^0_j)_\ast ((\epsilon_{ij}^j)_\ast [Z] \cdot (\epsilon_{j})^\ast[\beta]) \in \CH^\ast (X_0) .
\end{align}
We use this compatibility, to prove the following.

\begin{lemma} \label{lem:intersection:snc-divisor}
Let $R$ be a discrete valuation ring 
and let $\pi:\mathcal X\to \Spec R$ be a strictly semi-stable $R$-scheme with special fibre $X_0$.
Let $\alpha_0,\tilde \alpha_0\in Z^a(X_0)$ be cycles on $X_0$ that are supported on the smooth locus of $X_0$.
Let further $\beta\in Z^b(\mathcal X)$ be a cycle on $\mathcal X$ that is flat over $R$ and denote its restriction to $X_0$ by $\beta_0 $.
Assume that $\beta_0$ meets $\alpha_0$ and $\tilde \alpha_0$ properly.
If $\alpha_0$ and $\tilde \alpha_0$ are rationally equivalent on $X_0$, then so are the cycles
$$
\alpha_0 \cdot \beta_0\in Z^{a+b}(X_0)\ \ \text{and}\ \ \tilde \alpha_0 \cdot \beta_0 \in Z^{a+b}(X_0) .
$$
\end{lemma}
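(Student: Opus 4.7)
The plan is to reduce everything to intersection theory on the smooth components $X_{0i}\subset X_0$, where classical theory applies, and to control the auxiliary cycles that appear on the singular locus $\bigcup_{i\neq j}X_{0ij}$ by means of the compatibility (\ref{eq:epsilon}).

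First I decompose the input data by components. Since $\alpha_0$ and $\tilde\alpha_0$ are supported in the smooth locus of $X_0$, no component of their supports is contained in any $X_{0ij}$, so they admit unique decompositions $\alpha_0=\sum_i(\epsilon_i^0)_\ast\alpha_{0i}$ and $\tilde\alpha_0=\sum_i(\epsilon_i^0)_\ast\tilde\alpha_{0i}$ with $\alpha_{0i},\tilde\alpha_{0i}\in Z^a(X_{0i})$ supported in the smooth locus. As $\beta$ is flat over $R$ and each $X_{0i}$ is a Cartier divisor on $\mathcal{X}$, the cycles $\beta_i:=\beta\cdot [X_{0i}]\in Z^b(X_{0i})$ are defined on cycles, and the identity $[X_0]=\sum_i[X_{0i}]$ of Cartier divisors on $\mathcal{X}$ (which holds locally because $\mathcal X$ is strictly semi-stable) yields $\beta_0=\sum_i (\epsilon_i^0)_\ast\beta_i$. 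Because $\alpha_{0i}$ lies in the smooth locus of $X_0$, it cannot meet any $\beta_j\subset X_{0j}$ with $j\neq i$ (such a meeting would have to occur on $X_{0ij}$, which $\alpha_{0i}$ avoids), so one obtains the component-wise formulas
$$
\alpha_0\cdot\beta_0=\sum_i (\epsilon_i^0)_\ast(\alpha_{0i}\cdot\beta_i),\qquad \tilde\alpha_0\cdot\beta_0=\sum_i(\epsilon_i^0)_\ast(\tilde\alpha_{0i}\cdot\beta_i),
$$
with the intersections on the right computed on the smooth varieties $X_{0i}$.

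Next I decompose the rational equivalence. Writing $\alpha_0-\tilde\alpha_0=\sum_k (\phi_k)_\ast \operatorname{div}(f_k)$ for irreducible subvarieties $\phi_k:W_k\hookrightarrow X_0$ and rational functions $f_k$, each $W_k$ is contained in a unique component $X_{0i_k}$, so regrouping produces cycles $\gamma_i\in Z^a(X_{0i})$, each rationally equivalent to zero on the smooth $X_{0i}$, with $\sum_i(\epsilon_i^0)_\ast\gamma_i=\alpha_0-\tilde\alpha_0$. The discrepancy $\eta_i:=\gamma_i-(\alpha_{0i}-\tilde\alpha_{0i})\in Z^a(X_{0i})$ satisfies $\sum_i (\epsilon_i^0)_\ast\eta_i=0$ in $Z^a(X_0)$, and comparing coefficients at any prime $P\subset X_{0i}$ not contained in any $X_{0ij}$ forces $\eta_i$ to be supported on $\bigcup_{j\neq i}X_{0ij}$. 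Using smoothness of $X_{0i}$ and $[\gamma_i]=0$, this yields $[\alpha_{0i}-\tilde\alpha_{0i}]\cdot(\epsilon_i)^\ast[\beta]=-[\eta_i]\cdot(\epsilon_i)^\ast[\beta]$ in $\CH^{a+b}(X_{0i})$, and combining with the componentwise formulas above gives
$$
[\alpha_0\cdot\beta_0]-[\tilde\alpha_0\cdot\beta_0]=-\sum_i (\epsilon_i^0)_\ast\bigl([\eta_i]\cdot(\epsilon_i)^\ast[\beta]\bigr)\in\CH^{a+b}(X_0).
$$

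The main obstacle is to show this right-hand side vanishes, and this is precisely where the compatibility (\ref{eq:epsilon}) is essential. Since $\eta_i$ is supported on $\bigcup_{j}X_{0ij}$, every prime $P$ occurring in some $\eta_i$ lies in an intersection $\bigcap_{i\in I_P}X_{0i}$ with $|I_P|\geq 2$, and applying (\ref{eq:epsilon}) pairwise to $P$ shows that the class $(\epsilon_i^0)_\ast\bigl([P]\cdot(\epsilon_i)^\ast[\beta]\bigr)\in\CH^{a+b}(X_0)$ is the same for every $i\in I_P$. On the other hand, the cycle identity $\sum_i (\epsilon_i^0)_\ast\eta_i=0$ translates, for each such $P$, into the relation $\sum_{i\in I_P}m_i^P=0$ on the multiplicities $m_i^P$ of $P$ in $\eta_i$. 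Expanding the sum above prime by prime and collecting these relations yields the required vanishing and completes the proof.
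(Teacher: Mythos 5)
Your proof is correct and takes essentially the same route as the paper's: both reduce the computation to the smooth components of $X_0$ and use the compatibility (\ref{eq:epsilon}) to show that the contributions from the intersection locus cancel, the paper packaging this via the normalization $\tau:\tilde X_0\to X_0$ and its two invariance operations, while you spell it out componentwise through the discrepancy cycles $\eta_i$. One small inaccuracy (not a gap): a subvariety $W_k\subset X_0$ need not lie in a \emph{unique} component $X_{0i_k}$ --- it could lie in some $X_{0ij}$ and hence in two or more --- but picking any one component containing $W_k$ makes the regrouping work exactly as you describe.
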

\begin{proof}
Let $\tau:\tilde X_0 \to X_0$ be the normalization of $X_0$.
Since $\pi$ is strictly semi-stable,
$
\tilde X_0=\sqcup_i \tilde X_{0i}
$
is the disjoint union of the components $ \tilde X_{0i}=X_{0i}$ of $X_0$. 
Let $\tau_i:=\tau|_{\tilde X_{0i}}$ be the restriction of $\tau$ to the component $\tilde X_{0i}$ and let $\iota:X_0\to \mathcal X$ be the inclusion of the special fibre. 
Then $\iota\circ \tau_i$ is a regular embedding and so we may consider the pullback 
$$
(\iota\circ \tau_i)^\ast [\beta] \in \CH^{b}(\tilde X_{0i}). 
$$
Since $\beta$ is flat over $R$, the above pullback is already defined on the level of cycles.
This allows us to define
$$
\tau^\ast \beta_0:=\sum_i (\iota\circ \tau_i)^\ast \beta \in Z^b(\tilde X_0)=\bigoplus_i Z^b(\tilde X_{0i}).
$$

Since $\alpha_0$ and $\tilde \alpha_0$ are supported on the smooth locus of $X_0$, the pullbacks $\tau^\ast \alpha_0,\tau^\ast \tilde \alpha_0 \in Z^a(\tilde X_0)$ are defined on the level of cycles as well. 
By construction of these pullbacks, we have the following identities of cycles on $X_0$:
\begin{align} \label{eq:alpha.beta}
\tau_\ast(\tau^\ast \alpha_0 \cdot \tau^\ast \beta_0)= \alpha_0 \cdot \beta_0
\ \ \text{and} \ \ 
\tau_\ast(\tau^\ast \tilde \alpha_0 \cdot \tau^\ast \beta_0)= \tilde \alpha_0 \cdot \beta_0.
\end{align} 

Assume now that $\alpha_0$ and $\tilde \alpha_0$ are rationally equivalent on $X_0$.
To show that the intersections $\alpha_0 \cdot \beta_0$ and $\tilde \alpha_0 \cdot \beta_0$ are rationally equivalent on $X_0$ as well, it suffices by (\ref{eq:alpha.beta})  to prove the following:
Let $Z\subset \tilde X_{0}$ be a closed integral subscheme of codimension $a$, then the class 
\begin{align} \label{eq:Z.beta}
\tau_{\ast}([Z]\cdot [ \tau^\ast \beta_0]) \in \CH^{a+b}(X_0)
\end{align}
is invariant under the following operations:
\begin{enumerate}
\item replace $Z$ by a rationally equivalent cycle on $\tilde X_0$; \label{item:1}
\item if $Z\subset \tilde X_{0i}$ and $\tau(Z)\subset X_{0i}\cap X_{0j}$, then replace $Z$ by the unique subscheme $Z'\subset \tilde X_{0j}$ with $\tau(Z)=Z'$.  \label{item:2}
\end{enumerate}
Since $\tilde X_0$ is smooth, it is clear that (\ref{eq:Z.beta}) is invariant under (\ref{item:1}), see \cite[\S 1.4 and \S 8]{fulton}.
Moreover, invariance under (\ref{item:2}) follows from (\ref{eq:epsilon}), which concludes the proof of the lemma.
\end{proof}

\section{The diagonal distinguishes stable birational types}

It is well-known that the non-existence of a decomposition of the diagonal prevents a variety from being stably rational, \emph{i.e.}\ from being stably birational to a point. 
The purpose of this section is to prove the following theorem, which shows that decompositions of the diagonal allow to distinguish two stable birational types in much greater generality. 
Partial results in this direction have previously been obtained by Voisin \cite{Voi19}.

\begin{theorem} \label{thm:degeneration:mixed:char:1}
Let $R$ be a discrete valuation ring with algebraically closed residue field $k$.
Let $\pi:\mathcal X\to \Spec R$ and $\pi':\mathcal X'\to \Spec R$ be flat projective morphisms with geometrically connected fibres such that $\pi$ is strictly semi-stable and $\pi'$ is smooth.
Assume 
\begin{enumerate}
\item the special fibre $X_0$ of $\pi$ has universally trivial Chow group of zero-cycles;
\item the special fibre $X'_0$ of $\pi'$ does not admit a decomposition of the diagonal.
\end{enumerate}
Then the geometric generic fibres of $\pi$ and $\pi'$ are not stably birational to each other.
\end{theorem}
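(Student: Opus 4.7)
The plan is to argue by contradiction: assume that $X_{\bar\eta}$ and $X'_{\bar\eta}$ are stably birational. Replacing $\mathcal X$ by $\mathcal X\times_R\PP^a_R$ and $\mathcal X'$ by $\mathcal X'\times_R\PP^b_R$ for suitable $a,b\geq 0$, one reduces to the case where $X_{\bar\eta}$ and $X'_{\bar\eta}$ are actually birational, of the same dimension $n$. Lemma~\ref{lem:ss} keeps $\mathcal X$ strictly semi-stable and $\mathcal X'$ smooth and proper over $R$, while both hypothesis (1) (since $\CH_0(Y\times \PP^a)=\CH_0(Y)$ universally) and hypothesis (2) (since admitting a decomposition of the diagonal is a stable birational invariant for smooth projective varieties) are preserved. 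After passing to a finite DVR extension of $R$ (preserving algebraic closedness of the residue field), the birational map $\phi\colon X_\eta\dashrightarrow X'_\eta$ is defined over $K$. We shall show that $X'_0$ admits a decomposition of the diagonal, contradicting hypothesis~(2).

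Let $\Gamma\subset\mathcal X\times_R\mathcal X'$ and $\Lambda\subset\mathcal X'\times_R\mathcal X$ be the Zariski closures of the graphs of $\phi$ and $\phi^{-1}$, respectively; both are integral and flat over $R$ of relative dimension $n$. By Lemma~\ref{lem:ss} the triple product $\mathcal T:=\mathcal X'\times_R\mathcal X\times_R\mathcal X'$ is strictly semi-stable over $R$. Using the moving lemma of Gabber--Liu--Lorenzini \cite[Theorem 2.3]{GLL}, replace $\pr_{12}^\ast\Lambda$ and $\pr_{23}^\ast\Gamma$ by rationally equivalent cycles on $\mathcal T$ that meet properly in the smooth locus of $\mathcal T$ with flat intersection over $R$. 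Lemma~\ref{lem:specialization} together with the compatibility of specialization and proper push-forward along $\pr_{13}\colon\mathcal T\to\mathcal X'\times_R\mathcal X'$ then yields
\[
\spe\bigl([\Gamma\circ\Lambda]_\eta\bigr)=[\Gamma_0\circ\Lambda_0]\in\CH^n(X'_0\times_k X'_0).
\]
Since $\phi\circ\phi^{-1}=\mathrm{id}_{X'_\eta}$, the left-hand side equals $\spe([\Delta_{X'_\eta}])=[\Delta_{X'_0}]$ by classical specialization on the smooth $R$-scheme $\mathcal X'\times_R\mathcal X'$. Consequently, $[\Delta_{X'_0}]=[\Gamma_0\circ\Lambda_0]$ in $\CH^n(X'_0\times_k X'_0)$.

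It remains to invoke hypothesis~(1). The restriction of $\Lambda_0\in\CH^n(X'_0\times_k X_0)$ to $\eta_{X'_0}\times X_0$ is a zero-cycle on the base change of $X_0$ to $k(X'_0)$ which, by universal triviality of $\CH_0(X_0)$, is rationally equivalent to a class pulled back from a zero-cycle $z'$ on $X_0$. By the localization sequence for Chow groups, and using Lemma~\ref{lem:intersection:snc-divisor} to negotiate the singular locus of $X_0$, this gives
\[
[\Lambda_0]=[X'_0\times z']+[W_0]\in\CH_n(X'_0\times_k X_0),
\]
where $W_0$ is supported on $D\times_k X_0$ for some proper closed $D\subsetneq X'_0$. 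Composing with $\Gamma_0$ then produces
\[
[\Delta_{X'_0}]=[X'_0\times\Gamma_0(z')]+[\Gamma_0\circ W_0],
\]
in which the second term is supported on $D\times_k X'_0$ and so does not dominate the first factor. This is the desired decomposition of the diagonal on $X'_0$, contradicting hypothesis~(2).

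The hard part of the argument is the cycle-theoretic identity $\spe([\Gamma\circ\Lambda]_\eta)=[\Gamma_0\circ\Lambda_0]$: because $\mathcal X$ (and hence $\mathcal T$) is only strictly semi-stable rather than smooth, classical specialization of intersection products does not directly apply. The combination of the Gabber--Liu--Lorenzini moving lemma with the semi-stable intersection theory of Section~3 (in particular Lemmas~\ref{lem:specialization} and~\ref{lem:intersection:snc-divisor}) is precisely what is needed to circumvent this obstacle, and similar care is required to define the composition $\Gamma_0\circ(X'_0\times z')$ on $X'_0\times_k X_0\times_k X'_0$ and to check that $\Gamma_0\circ W_0$ does not dominate the first factor.
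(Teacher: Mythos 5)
Your high-level architecture is right (correspondences, specialization, universal $\CH_0$-triviality), but there is a genuine gap at the central step, and it is exactly the difficulty the paper is designed to overcome. You invoke the Gabber--Liu--Lorenzini moving lemma \cite[Theorem 2.3]{GLL} to move $\pr_{12}^\ast\Lambda$ and $\pr_{23}^\ast\Gamma$ on the triple product $\mathcal T=\mathcal X'\times_R\mathcal X\times_R\mathcal X'$. However, that theorem is a moving lemma for \emph{relative zero-cycles}, whereas the cycles you want to move have relative dimension $2n$ over $R$ (they are codimension $n$ in the $3n$-dimensional family $\mathcal T$). So GLL does not apply as stated, and you are left with no way to make sense of the intersection $(\pr_{12}^\ast\Lambda)\cdot(\pr_{23}^\ast\Gamma)$ on the singular scheme $\mathcal T$, nor of its specialization: there is in general no intersection product in the Chow groups of singular varieties, and Lemma~\ref{lem:specialization}/Lemma~\ref{lem:intersection:snc-divisor} only produce well-defined intersections under specific hypotheses (proper intersection inside the smooth locus of $\pi$, resp.\ one factor supported in the smooth locus of $X_0$ and the other flat over $R$) that your argument does not arrange. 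The same problem recurs when you ``compose with $\Gamma_0$'': that composition lives on $X'_0\times_k X_0\times_k X'_0$, which is singular because $X_0$ is, and you have no mechanism to carry it out.

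The idea the paper uses to escape this -- and which is missing from your argument -- is to localize the second factor $\mathcal X'$ at the generic point of $X'_0$, obtaining a discrete valuation ring $A=\mathcal O_{\mathcal X',\eta_{X'_0}}$, and to base change to $A$. Then the relevant cycle (the fibre of $\Gamma^t$ over $\eta_{X'_0}$) becomes a \emph{relative zero-cycle} on $\mathcal X_A$, which is strictly semi-stable over $A$ by Lemma~\ref{lem:ss}, and GLL genuinely applies to move it into the smooth locus. The resulting intersection with $\Gamma_A$ on $\mathcal X_A\times_A\mathcal X'_A$ is of relative dimension zero, which is precisely what Lemmas~\ref{lem:specialization} and~\ref{lem:intersection:snc-divisor} are set up to handle; the localization also directly provides the statement that $\CH_0((X_0)_{k(X'_0)})$ is trivial, feeding into hypothesis (1). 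A secondary, smaller gap: after your finite extension of $R$ that makes $\phi$ defined over the new $K$, the scheme $\mathcal X$ is in general no longer strictly semi-stable; you need something like Hartl's result (Lemma~\ref{lem:hartl}) to replace it by a strictly semi-stable modification whose special fibre still has universally trivial $\CH_0$.
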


If $\pi$ is smooth, then the above theorem is known and follows by similar arguments as in \cite{Voi19}.
However, for most applications (e.g.\ to hypersurfaces) it is essential to allow  $\pi$ to have singular special fibre.

Theorem \ref{thm:degeneration:mixed:char:1}  
can be seen as a cycle-theoretic analogue of the main result in \cite{NS}, where Nicaise and Shinder showed that the geometric generic fibres of two strictly semi-stable families over a discrete valuation ring of equal characteristic zero are not stably birational, unless the special fibres have the same class in the Grothendieck ring of varieties modulo the class of the affine line. 

Compared to \cite{NS,Sh19}, the main advantage of the above theorem is that it works over fields of arbitrary characteristic.
Note however that there is also an advantage in characteristic zero, because the assumption on the universal $\CH_0$-triviality 
holds in various cases
where Shinder's assumption does not. 
For instance, a general arrangement of $d$ hyperplanes in $\CP^{n+1}$ is always universally $\CH_0$-trivial, but its class in the Grothendieck group of varieties is congruent to $1$ mod $\LL$ only if $d\leq n+1$.

As an immediate corollary of Theorem \ref{thm:degeneration:mixed:char:1} and \cite[Lemma 8]{Sch19}, 
we have for instance.

\begin{corollary} \label{cor:degeneration:main}
Let $k$ be an uncountable algebraically closed field of arbitrary characteristic.
Let $\pi:\mathcal X\to C$ be a flat projective morphism between smooth $k$-varieties $\mathcal X$ and $C$ with $\dim C=1$.
Let $0\in C$ be a closed point and assume that
\begin{enumerate}
\item the very general fibre of $\pi$ does not admit a decomposition of the diagonal;
\item the special fibre $X_0$ is a reduced simple normal crossing divisor on $\mathcal X$ which has universally trivial Chow group of zero-cycles.
\end{enumerate}
Then two very general fibres of $\pi$ are not stably birational to each other.
\end{corollary}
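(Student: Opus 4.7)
The plan is to apply Theorem~\ref{thm:degeneration:mixed:char:1} to the localization of $\pi$ at $0$, paired with a constant family built from another very general fibre of $\pi$. Let $R := \OO_{C,0}$ be the discrete valuation ring with (algebraically closed) residue field $k$ and fraction field $K = k(C)$, and let $\pi_R \colon \mathcal X \times_C \Spec R \to \Spec R$ be the base change of $\pi$. Its total space is regular, being a localization of the smooth $k$-variety $\mathcal X$, and by hypothesis~(2) its special fibre is the reduced simple normal crossing divisor $X_0 \subset \mathcal X$. This means that the components of $X_0$ are smooth Cartier divisors on $\mathcal X_R$ whose $r$-fold intersections are smooth of codimension $r$; hence $\pi_R$ is strictly semi-stable, and its special fibre is universally $\CH_0$-trivial by hypothesis~(2), verifying condition~(i) of Theorem~\ref{thm:degeneration:mixed:char:1}.

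Next I would fix a very general closed point $s' \in C$ (in the sense of hypothesis~(1)). Since $\pi$ is smooth away from a proper closed subset of $C$, the fibre $X_{s'}$ is smooth and projective, and by hypothesis~(1) it does not admit a decomposition of the diagonal. Consider then the constant family $\pi'_R \colon X_{s'} \times_k \Spec R \to \Spec R$; it is smooth, projective and has geometrically connected fibres, and its special fibre $X_{s'}$ verifies condition~(ii) of Theorem~\ref{thm:degeneration:mixed:char:1}. Applying the theorem yields that the geometric generic fibres $\mathcal X_{\overline K}$ of $\pi_R$ and $X_{s'} \times_k \overline K$ of $\pi'_R$ are not stably birational over $\overline K$.

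To translate this back into a statement about closed fibres, I would invoke \cite[Lemma 8]{Sch19}: if $X_s$ were stably birational to $X_{s'}$ over $k$ for $s$ ranging over a very general set in $C$, then base changing to $\overline K$ and using that specialization principle would force $\mathcal X_{\overline K}$ to be stably birational to $X_{s'} \times_k \overline K$ over $\overline K$, contradicting the previous step. Consequently the set of $s \in C$ with $X_s$ stably birational to $X_{s'}$ is contained in a countable union of proper closed subsets of $C$ depending on $s'$; combined with the very general choice of $s'$, this is exactly the claim that two very general fibres of $\pi$ are not stably birational. The one place where mild care is needed is the verification that $\pi_R$ is strictly semi-stable, but this is immediate from the SNC hypothesis on $X_0$ inside the smooth total space $\mathcal X$; the rest is a direct packaging of Theorem~\ref{thm:degeneration:mixed:char:1} with the specialization principle of \cite[Lemma 8]{Sch19}.
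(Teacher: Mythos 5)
Your proof is correct and takes essentially the same route as the paper: the corollary is obtained by applying Theorem~\ref{thm:degeneration:mixed:char:1} to the localization $\mathcal X\times_C\Spec\OO_{C,0}\to\Spec\OO_{C,0}$ (strictly semi-stable thanks to the SNC hypothesis and smoothness of $\mathcal X$) paired with a constant family with fibre a very general member, and then descending from geometric generic to very general closed fibres via \cite[Lemma 8]{Sch19}. This is precisely what the paper means by "immediate corollary of Theorem~\ref{thm:degeneration:mixed:char:1} and \cite[Lemma 8]{Sch19}."
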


The key step in the proof of Theorem \ref{thm:degeneration:mixed:char:1} is the following proposition.

\begin{proposition} \label{prop:degeneration:mixed:char}
Let $R$ be a discrete valuation ring with algebraically closed residue field $k$.
Let $\pi:\mathcal X\to \Spec R$ and $\pi':\mathcal X'\to \Spec R$ be flat projective schemes  with connected fibres such that $\pi$ is strictly semi-stable and $\pi'$ is smooth.
Assume 
\begin{enumerate}
\item the generic fibres of $\pi$ and $\pi'$ are stably birational to each other;
\item the special fibre $X_0$ of $\pi$ has universally trivial Chow group of zero-cycles.
\end{enumerate}
Then the special fibre $X_0'$ of $\pi'$ admits a decomposition of the diagonal.
\end{proposition}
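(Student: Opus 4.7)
I would combine the birational equivalence of the generic fibres with the universal $\CH_0$-triviality of $X_0$ via a specialization argument across an auxiliary DVR extension of $R$. The diagonal of $X'_0$ will be identified, through a correspondence coming from the birational map, with the image of a $K'$-point on $X_0$ which collapses to a $k$-rational $0$-cycle by universal $\CH_0$-triviality. To set this up, first replace $\mathcal X$ and $\mathcal X'$ by $\mathcal X\times_R\CP^m_R$ and $\mathcal X'\times_R\CP^n_R$ respectively, for suitable $m,n$, so that $X_\eta$ and $X'_\eta$ become birational. By Lemma \ref{lem:ss} these remain strictly semi-stable (resp.\ smooth); universal $\CH_0$-triviality of $X_0$ is preserved under product with $\CP^m$; and the decomposition of the diagonal of the smooth variety $X'_0$ is equivalent to that of $X'_0\times\CP^n$, since it coincides with universal $\CH_0$-triviality for smooth projective varieties. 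Fix then a birational map $\phi:X_\eta\dashedlongrightarrow X'_\eta$ and let $\Gamma\subset\mathcal X\times_R\mathcal X'$ be the closure of its graph, a horizontal cycle on the strictly semi-stable $R$-scheme $\mathcal X\times_R\mathcal X'$ (again by Lemma \ref{lem:ss}).

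\textbf{A $K'$-point on $X_0$ and its collapse.} Let $\eta_0'$ be the generic point of $X'_0$, set $K':=k(X'_0)$, and let $A:=\mathcal O_{\mathcal X',\eta_0'}$, a DVR over $R$ with fraction field $L:=K(X'_\eta)$ and residue field $K'$. Composing the inclusion $\Spec A\hookrightarrow\mathcal X'$ with $\phi^{-1}$ yields an $L$-point of $X_\eta$, hence of the proper $A$-scheme $\mathcal X_A:=\mathcal X\times_R A$; by the valuative criterion, this extends to an $A$-point $\sigma:\Spec A\to\mathcal X_A$ whose closed fibre is a $K'$-point $p$ of $(X_0)_{K'}$. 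Since $k$ is algebraically closed and $X_0$ is universally $\CH_0$-trivial, $X_0$ has a $k$-point $p_0$, and $[p]=[p_0]\in\CH_0((X_0)_{K'})$.

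\textbf{Transferring the identity through $\Gamma$.} The key step is the identity
\[
[\delta_{X'_0}] \;=\; \pr_{2,*}\bigl((\Gamma_0)_{K'}\cdot(\{p\}\times_{K'}(X'_0)_{K'})\bigr) \;\in\; \CH_0((X'_0)_{K'}),
\]
where $\Gamma_0$ is the restriction of $\Gamma$ to $X_0\times_k X'_0$. On the generic fibre of $\Spec A$, the analogous identity is tautological: the generic restriction of $\sigma$ is the $L$-point $\phi^{-1}(\eta_0')$ of $X_\eta$, and $\Gamma_\eta$ is the graph of $\phi$, so the analogous intersection projects to $[\eta_0']=[\delta_{X'_\eta}]$. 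To propagate this identity from the generic to the closed fibre of $\Spec A$, I would apply Lemma \ref{lem:specialization} (compatibility of specialization and intersection products in the non-smooth setting) to the $A$-scheme $(\mathcal X\times_R\mathcal X')_A$, strictly semi-stable over $A$ by Lemma \ref{lem:ss}, together with Lemma \ref{lem:intersection:snc-divisor} (invariance of intersections on the special fibre of a strictly semi-stable scheme) to handle cycle identities across the simple-normal-crossings components of $X_0\times_k X'_0$. Combining this with $[p]=[p_0]$ then identifies $[\delta_{X'_0}]$ with the class of a $k$-rational $0$-cycle on $X'_0$, yielding the desired decomposition of the diagonal.

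\textbf{Main obstacle.} The principal technical difficulty is the intersection-theoretic third step: $\mathcal X\times_R\mathcal X'$ has a singular (simple normal crossings) special fibre, so Fulton's intersection product is defined only on its smooth locus. To arrange that $\Gamma$ meets the image of $\sigma$ (extended over $\mathcal X'$) properly within this smooth locus, I would appeal to the Gabber--Liu--Lorenzini moving lemma \cite[Theorem 2.3]{GLL} to move $\sigma$ within its rational equivalence class on $\mathcal X_A$. The specialization compatibility of Lemma \ref{lem:specialization} and the semi-stable intersection theory of Lemma \ref{lem:intersection:snc-divisor} then combine to provide the bridge from the generic to the closed fibre of $\Spec A$ in a setting where standard smooth specialization does not apply directly.
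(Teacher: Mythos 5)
Your proposal is correct and follows essentially the same route as the paper: localize $\mathcal X'$ at the generic point of $X'_0$ to obtain the auxiliary DVR $A$, restrict the graph of the birational map to a relative zero-cycle on $\mathcal X_A$ (your section $\sigma$ has the same generic fibre as the paper's $\tilde\gamma$), invoke the Gabber--Liu--Lorenzini moving lemma to push that cycle into the smooth locus of $\pi_A$ and into general position with respect to $\Gamma_A$, and then combine Lemmas \ref{lem:specialization} and \ref{lem:intersection:snc-divisor} with universal $\CH_0$-triviality of $X_0$ to identify $[\delta_{X'_0}]$ with a constant zero-cycle. The only cosmetic difference is that you package the relative zero-cycle as a single $A$-point via the valuative criterion before moving it, whereas the paper works directly with the restriction of $\Gamma$; after the moving lemma this distinction evaporates, since the moved cycle need not be a section anyway.
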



\begin{proof}[Proof of Proposition \ref{prop:degeneration:mixed:char}] 
Up to multiplying $\mathcal X$ and $\mathcal X'$ with some projective spaces (possibly of different dimensions), we may assume that there is a birational map $f:X_\eta\dashrightarrow X'_\eta$.
In particular, $\pi$ and $\pi'$ are of the same relative dimension, which we denote by $n$.
Let 
$$
\Gamma\subset \mathcal X\times_R \mathcal X'
$$
be the closure of the graph of $f$, which is a cycle of codimension $n$ on $\mathcal X\times_R \mathcal X'$.
Note that $\Gamma$ is automatically flat over $R$. 

To explain the idea of the proof, assume first that $\pi$ is smooth and let $ \Gamma^{t}\subset  \mathcal X'\times_R \mathcal X$ be the transpose of $\Gamma$, which is nothing but the closure of the graph of $f^{-1}$.
Since $\pi$ is smooth, $\mathcal X\times_R \mathcal X'$ is smooth over $R$ and so we can define the composition of cycles $\Gamma\circ\Gamma^t\in \CH^{n}(\mathcal X'\times_R\mathcal X')$ as pushforward of 
\begin{align} \label{eq:explanation}
(\pr_{12}^\ast \Gamma^t)\cdot (\pr_{23}^\ast \Gamma) \in \CH^n(\mathcal X'\times _R\mathcal X\times_R\mathcal X').
\end{align}
Since $f\circ f^{-1}=\id$, the restriction of this cycle to the generic fibre of $\mathcal X'\times_R \mathcal X' \to \Spec R$ is the diagonal.
It thus follows from the specialization map on Chow groups \cite{fulton} that the special fibre of the above cycle must be rationally equivalent to the diagonal of $X'_0$.
On the other hand, this cycle is given by the pushforward of
\begin{align}  \label{eq:explanation2}
(\pr_{12}^\ast \Gamma^t_0)\cdot (\pr_{23}^\ast \Gamma_0) \in \CH^n(  X'_0\times _k X_0\times_k X'_0) 
\end{align}
to $X'_0\times_k X'_0$.
Since $X_0$ has universally trivial Chow group of zero-cycles, the restriction of $\Gamma^t_0$ to the generic fibre $ k(X'_0)\times_k X_0$ of the projection $X'_0\times_kX_0\to X'_0$ is rationally equivalent to the base change $k(X'_0)\times z$ of a zero-cycle $z$ on $X_0$.
As we already know that the pushforward of (\ref{eq:explanation2}) to $X'_0\times_k X'_0$ is rationally equivalent to the diagonal, this information is enough to conclude that  $X'_0$ admits a decomposition of the diagonal.
In this argument, we used heavily that $\pi$ is smooth, as the intersections in (\ref{eq:explanation}) and (\ref{eq:explanation2}) are not even defined in the non-smooth setting.
The idea that allows us to bypass this difficulty is to replace in (\ref{eq:explanation}) the first factor of $\mathcal X'\times _R\mathcal X\times_R\mathcal X'$ by the localization of $\mathcal X'$ at the generic point of $X'_0\subset \mathcal X'$.
This way, the intersections in (\ref{eq:explanation}) and (\ref{eq:explanation2}) are of relative dimension zero and so we can hope to use a moving lemma to make sense of these intersections.
Moreover, by localizing at the generic point of $X'_0\subset \mathcal X'$ it is still possible to exploit the information that $ k(X_0')\times X_0$ has trivial Chow group, which will eventually allow us to prove what we want. 
We give the details of this argument in what follows.

Let $A:=\mathcal O_{\mathcal X',\eta_{X'_0}}$ be the local ring of $\mathcal X'$ at the generic point of $X'_0\subset \mathcal X'$. 
This is a discrete valuation ring, because $X'_0\subset \mathcal X'$ is a Cartier divisor.
By Lemma \ref{lem:ss}, the base change $\mathcal X_A:=\mathcal X\times_R\Spec A  $ is strictly semi-stable over $A$. 

Let 
$$
\tilde \gamma\in Z^n(\mathcal X_A)
$$ 
be the relative zero-cycle that is induced by $\Gamma$.
Since $\pi$ is strictly semi-stable, the smooth locus $U\subset \mathcal X$ of $\pi$ meets each component of any fibre of $\pi$ in a non-empty Zariski open subset.
Since $\mathcal X_A$ is regular, the moving lemma for relative zero-cycles in \cite[Theorem 2.3]{GLL} thus shows that 
\begin{enumerate}
\item[($\sharp$)]
there is a cycle $\gamma\in Z^n(\mathcal X_A)$ with $[\gamma]=[\tilde \gamma]\in \CH^n(\mathcal X_A)$, which does not meet the singular locus of $\pi_A:\mathcal X_A\to \Spec A$ and also not the locus in $\mathcal X_A$ over which the projection $\pr_1:\Gamma_A\to \mathcal X_A$ is non-flat, where $\Gamma_A\subset \mathcal X_A\times_A\mathcal X'_A$ denotes the base change of $\Gamma$ to $A$.
\end{enumerate}

By Lemma \ref{lem:ss}, $\mathcal X_A\times_A\mathcal X'_A $ is a semi-stable $A$-scheme.
Consider the following diagram
$$
\xymatrix{
& \mathcal X_A\times_A\mathcal X'_A \ar[rd]^{p}  \ar[ld]_{q}&\\
\mathcal X_A  & & \mathcal X'_A }
$$
where $p$ and $q$ denote the corresponding projections.
Since $q$ is flat, there is a flat pullback map $q^\ast$ on the level of cycles.
Condition ($\sharp$) implies the following:
\begin{enumerate} 
\item[($\sharp'$)] 
$q^\ast \gamma$ is a cycle on $ \mathcal X_A\times_A\mathcal X'_A $ which intersects $\Gamma_A$ properly in the smooth locus of  $ \mathcal X_A\times_A\mathcal X'_A $ over $A$. 
\end{enumerate} 
Therefore, the intersection $(q^\ast\gamma) \cdot (\Gamma_A)$  is defined on the level of cycles, and so we may consider the cycle
\begin{align} \label{eq:alpha}
\alpha:= p_\ast( (q^\ast\gamma) \cdot (\Gamma_A)) \in Z^n( \mathcal X'_A).
\end{align} 
By ($\sharp$), the generic fibre of this cycle is rationally equivalent to 
$$
p_\ast( (q^\ast \tilde \gamma_\eta) \cdot (\Gamma_\eta)_{F(X'_\eta)}) \in Z^n\left( (X'_\eta)_{F(X'_\eta)}\right),
$$ 
where $F$ denotes the fraction field of $R$.
Since $\Gamma$ and $\tilde \gamma$ are induced by the birational map $f:X_\eta\dashrightarrow X'_\eta$, the above cycle coincides with the zero-cycle $\delta_{X'_\eta}\in Z^n\left((X'_\eta)_{F(X'_\eta)}\right)$ that is induced by the diagonal of $X'_\eta$.
Hence,
$$
[\alpha_\eta]=[\delta_{X'_\eta}] \in \CH^n\left((X'_\eta)_{F(X'_\eta)}\right) .
$$
Applying the specialization map on Chow groups, we thus find
\begin{align}  \label{eq:alpha_0=delta}
[\alpha_0]=[\delta_{X'_0}] \in \CH^n\left((X'_0)_{K}\right) ,
\end{align}
where $K=k(X'_0)$ denotes the residue field of $A$ and $\delta_{X'_0}$ denotes the zero-cycle induced by the diagonal of $X'_0$.
In what follows, we use Lemmas \ref{lem:specialization} and \ref{lem:intersection:snc-divisor} to compute $[\alpha_0]$ in another way, which will allow us to conclude that $X'_0$ admits a decomposition of the diagonal.

By ($\sharp'$), Lemma \ref{lem:specialization} applies and we get
$$
\spe([q^\ast\gamma_\eta] \cdot [\Gamma_{F(X'_\eta)}])=[(q^\ast\gamma_0) \cdot (\Gamma_0)_K] \in \CH_{0}\left((X_0)_{K}\times_{K} (X_0')_{K}\right),
$$
where $q^\ast\gamma_0$ is the flat pullback of the zero-cycle $\gamma_0\in Z_0((X_0)_{K})$, given as special fibre of $\gamma \to \Spec A$ and $(\Gamma_0)_K$ is obtained via base change to $K$ of the $n$-cycle on $X_0\times X'_0$ given by the special fibre of $\Gamma \to \Spec R$.
Since specialization and proper pushforward commute (see \cite[Proposition 20.3]{fulton}), this yields
\begin{align} \label{eq:alpha_0:1}
[\alpha_0]=(p_0)_\ast\left( [(q_0^\ast\gamma_0) \cdot (\Gamma_0)_{K}]\right) \in \CH_0\left((X_0')_K\right).
\end{align}

Since $X_0$ has universally trivial Chow group of zero-cycles by assumption, 
$$
[\gamma_0]=[z_K]\in \CH^n\left((X_0)_K\right),
$$
where $z\in Z^n(X_0)$ is a zero-cycle with $\deg z=\deg \gamma_0=\deg \gamma_\eta=\deg \tilde \gamma_\eta =1$. 
By Lemma \ref{lem:intersection:snc-divisor}, we conclude from (\ref{eq:alpha_0:1}) that
$$
[\alpha_0]=(p_0)_\ast\left( [ (z\times X'_0)_K \cdot (\Gamma_0)_{K}]\right) 
$$ 
holds in $\CH_0( (X_0')_{K} ) $. 
This implies in particular 
$$
[\alpha_0]\in \im\left( \CH_0(X'_0)\to \CH_0\left((X'_0)_{K}\right)\right).
$$ 
Comparing this with (\ref{eq:alpha_0=delta}), we conclude that $X'_0$ admits a decomposition of the diagonal, as claimed.
This concludes the proof of the proposition.
\end{proof}

Proposition \ref{prop:degeneration:mixed:char} deals with stably birational generic fibres.
To deal with stably birational geometric generic fibres, we need the following result of Hartl \cite{hartl} about the behaviour of strictly semi-stable schemes under base change. 

\begin{lemma} \label{lem:hartl}
Let $R$ be a discrete valuation ring with algebraically closed residue field $k$ and let $\pi:\mathcal X\to \Spec R$ be strictly semi-stable.
Let $R\subset R'$ be a finite extension of discrete valuation rings and let $\mathcal X_{R'}$ be the base change of $\mathcal X$ to $R'$.
Then there is a finite sequence of blow-ups
$$
\tilde{\mathcal X}:=V^r\to V^{r-1}\to \dots \to V^1\to V^0:=\mathcal X_{R'} , 
$$ 
where $\tilde{\mathcal X}$ is strictly semi-stable over $R'$ and $V^i\to V^{i-1}$ is the blow-up of $V^{i-1}$ along an irreducible component of the special fibre $V^i_0$ of $V^{i}$ which is not Cartier.
Moreover, pushforward via the natural map induces an isomorphism on Chow groups
$$
\CH_0\left((\tilde X_0)_L\right)\stackrel{\sim}\longrightarrow \CH_0\left((X_0)_L\right)
$$
after base change to any extension $L$ of $k$.
\end{lemma}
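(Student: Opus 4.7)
The proof naturally splits into two parts: constructing the blow-up tower, and deducing the statement about Chow groups. The first part is essentially Proposition~1.3 of \cite{hartl}, which produces, starting from $\mathcal{X}_{R'}$, the desired sequence of blow-ups with $\tilde{\mathcal{X}}$ strictly semi-stable over $R'$. One should only observe that $R'/R$, being a finite extension of DVRs with algebraically closed residue field, is automatically totally ramified; the residue field of $R'$ is thus still $k$, and the special fibres of $\mathcal{X}$ and $\mathcal{X}_{R'}$ coincide as $k$-schemes, so that the natural map $\tilde X_0 \to X_0$ is the composition of the induced maps on special fibres along the blow-up tower.

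For the Chow-group statement, the plan is to argue one blow-up at a time and reduce to showing that each individual $\phi_i:V^i\to V^{i-1}$ induces an isomorphism $(\phi_i)_\ast:\CH_0((V^i_0)_L)\to\CH_0((V^{i-1}_0)_L)$; the full claim follows by composition. Fix such an $i$, let $Z\subset V^{i-1}_0$ denote the irreducible component along which $\phi_i$ is centred, and let $E\subset V^i_0$ denote the exceptional locus. Since $\phi_i$ is an isomorphism outside $Z$, the induced map on special fibres restricts to an isomorphism $V^i_0\setminus E \to V^{i-1}_0\setminus Z$. From the explicit local model for semi-stable reduction after ramified base change recorded in \cite{hartl}, the morphism $E\to Z$ is proper, surjective, with rationally chain connected geometric fibres -- concretely, the fibres are chains of $\PP^1$'s over the locus where $Z$ meets other components of $V^{i-1}_0$, and reduce to a point over the locus where $Z$ is already Cartier.

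The isomorphism on $\CH_0$ is then obtained by a short diagram chase from the localization exact sequences for the pairs $(V^{i-1}_0)_L \supset Z_L$ and $(V^i_0)_L \supset E_L$. Indeed, the structural information on $E\to Z$ gives surjectivity of the pushforward $\CH_0(E_L)\to\CH_0(Z_L)$ (since rationally chain connected varieties over algebraically closed fields have trivial $\CH_0$, or by invoking the projective bundle formula stratum by stratum), and combined with the isomorphism on open complements this forces $(\phi_i)_\ast$ to be an isomorphism on $\CH_0$. The main technical point, and the expected main obstacle, is unpacking Hartl's local analysis of semi-stable reduction after ramified base change to verify the claimed structure of $E\to Z$; once this is in place, the rest is formal.
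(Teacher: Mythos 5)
Your overall strategy—cite Hartl for the blow-up tower, then prove the $\CH_0$-isomorphism one blow-up at a time using the local structure of each step—is the same as the paper's. However, there are two concrete issues.

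First, a citation slip: the blow-up tower is \cite[Proposition~2.2]{hartl}, not Proposition~1.3 (the latter is the result about products and localizations used in Lemma~\ref{lem:ss}). Your observation that $R'/R$ is automatically totally ramified, so the residue field and hence the special fibre do not change under base change, is correct and worth saying, though the paper leaves it implicit.

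Second, and more seriously, your argument for the $\CH_0$-isomorphism establishes only surjectivity. From the two localization sequences
$$\CH_0(E_L) \to \CH_0\bigl((V^i_0)_L\bigr) \to \CH_0\bigl((V^i_0\setminus E)_L\bigr) \to 0$$
and the analogous one for $(V^{i-1}_0)_L \supset Z_L$, the surjectivity of $\CH_0(E_L)\to\CH_0(Z_L)$ together with the isomorphism on open complements yields surjectivity of $(\phi_i)_\ast$ by a straightforward chase, but these sequences are not exact on the left, and surjectivity on the closed stratum gives no control on the kernel of the middle map. Rational chain connectedness of the fibres of $E\to Z$ does not by itself produce the needed injectivity. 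The paper instead extracts from the proof of \cite[Proposition~2.2]{hartl} the sharper statement that $V^i_0$ is obtained from $V^{i-1}_0$ by \emph{glueing in} a Zariski-locally trivial $\PP^1$-bundle; in particular $\phi_i|_{V^i_0}$ admits a section $s:V^{i-1}_0\hookrightarrow V^i_0$ with $s(\phi_i(p))$ rationally equivalent to $p$ for every point $p$ of $V^i_0$ (on the glued-in $\PP^1$-bundle this is rational equivalence of two points on the same fibre $\PP^1$). Thus $s_\ast$ is a two-sided inverse of $(\phi_i)_\ast$ on $\CH_0$, giving the isomorphism in one stroke. To repair your version you would need to produce such a section from your description of $E\to Z$, for example via the strict transform of $Z$, rather than relying on rational chain connectedness of fibres and localization alone.
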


\begin{proof}
The first assertion is \cite[Proposition 2.2]{hartl}.
The proof of \cite[Proposition 2.2]{hartl} shows that in each step $V^i\to V^{i-1}$, the central fibre $V^i_0$ is obtained from $V^{i-1}_0$ by glueing in a Zariski locally trivial $\CP^1$-bundle.
Hence the assertion about the Chow group of $\tilde X_0$ follows.
\end{proof}

We are now able to prove Theorem \ref{thm:degeneration:mixed:char:1}.

\begin{proof}[Proof of Theorem \ref{thm:degeneration:mixed:char:1}] 
Replacing $R$ by its completion, we may assume that $R$ is a complete discrete valuation ring and we denote the fraction field of $R$ by $K$. 
For a contradiction, we assume that $X_{\overline \eta}$ is stably birational to $X'_{\overline \eta}$.
Then there is a finite field extension $L/K$, such that $X_{\eta}\times_KL$ is stably birational to $X'_{ \eta}\times_KL$.
Let $R'$ denote the integral closure of $R$ in $L$.
Since $R$ is a complete discrete valuation ring,  $R\to R'$ is a finite map of rings and $R'$ is a complete discrete valuation ring as well, see \cite[Th\'eor\`eme 23.1.5 and Corollaire 23.1.6]{EGAIV}. 
Replacing $\pi$ and $\pi'$ by the base change to $R'$, our families have all the initial properties, apart from the strict semi-stability of $\mathcal X$, which we will lose in general.
However, by Lemma \ref{lem:hartl}, there is projective birational morphism $\tilde{\mathcal X}\to \mathcal X$ such that $\tilde{\mathcal X}$ is a strictly semi-stable $R$-scheme and such that the special fibres $\tilde X_0$ and $X_0$ have isomorphic Chow groups of zero-cycles after any extension of the base field.
Hence, $\tilde X_0$ has universally trivial Chow group of zero-cycles, because the same holds true for $X_0$ by assumptions.
Therefore, Proposition \ref{prop:degeneration:mixed:char} shows that $X'_0$ admits a decomposition of the diagonal, which contradicts our assumptions.
This proves the theorem.
\end{proof} 

\vspace{-0.3cm}\clearpage

\section{Applications to hypersurfaces} 

Theorem \ref{thm:hypersurface:2} follows from the following more precise result.

\begin{theorem} \label{thm:hypersurface:3}
Let $k$ be an uncountable algebraically closed field of arbitrary characteristic and let
$Y$ be a smooth projective variety over $k$ which does not admit a decomposition of the diagonal.
For some $d,n\geq 1$, let $\ell\subset \CP(H^0(\CP^{n+1}_k,\mathcal O(d)))$ be a general pencil of hypersurfaces of degree $d$ in $\CP^{n+1}_k$ which contains a general arrangement of $d$ hyperplanes as one of its members.
(The pencil $\ell$ does not need to be very general, and also not general with respect to $Y$.)

Then $Y$ is not stably birational to a very general member of the pencil $\ell$. 
\end{theorem}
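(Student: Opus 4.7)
The plan is to apply Theorem \ref{thm:degeneration:mixed:char:1} to the pencil $\ell$, degenerating to the hyperplane arrangement, paired with the constant family $Y \times \ell \to \ell$, and then invoke a standard countability argument to pass from the generic member to a very general member. Let $\mathcal{X}\subset \CP^{n+1}_k\times \ell$ be the universal hypersurface over $\ell$, and let $0\in \ell$ correspond to the general arrangement $X_0 = H_1\cup\dots\cup H_d$. Writing $R=\widehat{\mathcal O}_{\ell,0}$ with fraction field $K$, we obtain a proper flat family $\mathcal X_R\to \Spec R$.

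The first technical point is to produce a strictly semi-stable model over $R$. Writing the pencil as $\{sF_0 + tF_\infty = 0\}$ with $F_\infty = H_1\cdots H_d$, a direct computation of partial derivatives shows that $\mathcal X$ is regular away from the codimension-three strata $F_0\cap H_i\cap H_j$ (for $i\neq j$) inside the special fibre, where $\mathcal X$ acquires ordinary double point type singularities (locally $xy + zw=0$). I would resolve these either by a direct blow-up of these strata (which replaces each node by a $\CP^1$-bundle over the blown-up locus and is mild enough to be carried out in arbitrary characteristic), or by invoking Lemma \ref{lem:hartl} after a suitable ramified base change. Either way, one obtains a projective birational modification $\tilde{\mathcal X}\to \mathcal X_R$ which is strictly semi-stable over $R$, and whose generic fibre coincides with the generic fibre of the pencil.

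Second, I would verify that the special fibre $\tilde X_0$ has universally trivial Chow group of zero-cycles. For the original arrangement $X_0 = H_1\cup\dots\cup H_d$, each component is $\cong \CP^n$ with universally trivial $\CH_0$, and any two components meet in a codimension-two linear subspace, so over any field extension $L/k$ any two points of $(X_0)_L$ are rationally equivalent through an intersection point, giving $\CH_0((X_0)_L)=\Z$. The exceptional divisors introduced by the resolution are projective bundles over the smooth blown-up loci, which are connected to the existing components through the incidence; a standard argument then shows that universal $\CH_0$-triviality propagates to $\tilde X_0$.

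With strict semi-stability and universal $\CH_0$-triviality of $\tilde X_0$ established, Theorem \ref{thm:degeneration:mixed:char:1} applied to $\tilde{\mathcal X}\to\Spec R$ and to the smooth constant family $Y\times \Spec R\to\Spec R$ (whose special fibre $Y$ does not admit a decomposition of the diagonal by hypothesis) implies that their geometric generic fibres are not stably birational. A spreading-out/countability argument in the style of \cite[Lemma~8]{Sch19}, essentially the one proving Corollary \ref{cor:degeneration:main}, then upgrades this to the statement that $Y$ is not stably birational to a very general member of $\ell$. The main obstacle is the construction of the strictly semi-stable model without recourse to Hironaka's resolution; the saving grace is that the only singularities appearing are nodal of the simplest type along smooth centres, so they can be resolved by explicit blow-ups whose effect on the Chow group of zero-cycles of the special fibre is under direct control.
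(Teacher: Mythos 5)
Your overall strategy coincides with the paper's: degenerate the pencil to the hyperplane arrangement, construct a strictly semi-stable model whose special fibre has universally trivial $\CH_0$, apply Theorem~\ref{thm:degeneration:mixed:char:1}, and conclude by a countability argument as in \cite[Lemma~8]{Sch19}. The paper handles the one nontrivial technical step---the construction of the semi-stable model---by citing Shinder's explicit resolution in \cite[Lemma~3.6]{Sh19}, obtained by iteratively blowing up the proper transforms of the hyperplanes $\{l_i=0\}$, and observing that this computation is characteristic-free.

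Your proposed alternative construction of the semi-stable model has two genuine gaps. First, the singularity analysis is only correct at \emph{generic} points of the strata $\{F_0=H_i=H_j=0\}$: where three or more hyperplanes meet $\{F_0=0\}$ the local equation is $sz + xyu\cdots=0$, which is not a quadric cone, and the loci $\{F_0=H_i=H_j=0\}$ for different pairs $(i,j)$ intersect one another. So ``a direct blow-up of these strata'' is not a priori a resolution, nor is it obviously semi-stable; one needs the iterative procedure of Shinder precisely to take care of these deeper strata, and the paper appeals to \cite[Lemma~3.6]{Sh19} rather than attempting an ad hoc blow-up. Second, your fallback---``invoking Lemma~\ref{lem:hartl} after a suitable ramified base change''---does not apply: Lemma~\ref{lem:hartl} takes a strictly semi-stable scheme as input and produces a semi-stable model of its base change; it cannot be used to produce a semi-stable model when the total space $\mathcal X$ fails to be regular to begin with, which is exactly the problem here. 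Once these issues in the resolution step are repaired (most economically by citing Shinder's computation, as the paper does), the remainder of your argument, including the $\CH_0$-triviality of the special fibre and the application of Theorem~\ref{thm:degeneration:mixed:char:1}, goes through as you describe.
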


\begin{proof}
Let $\pi:\mathcal X\to \ell\cong \CP^1_k$ be the universal family of the pencil.
By assumptions, there is a point $0\in \ell$ such that $X_0=\{l_1\cdots l_d=0\}$ for general linear polynomials $l_i$. 
By the genericness assumption on $\ell$,
$\mathcal X$ is smooth away from the central fibre $X_0$.
In \cite[Lemma 3.6]{Sh19}, Shinder computes a resolution of singularities $\tau:\widetilde {\mathcal X}\to \mathcal X$ by repeatedly blowing-up the proper transforms of $\{l_i=0\}$ for $i=1,\dots ,d$ and shows that $\tilde X_0$ is a reduced simple normal crossing divisor on $\tilde{\mathcal X}$, all of whose components are rational. (Shinder's paper is written over an uncountable algebraically closed field of characteristic zero, but that computation holds more generally over an arbitrary field.) Hence, $\tilde X_0$ has universally trivial Chow group of zero-cycles.

Consider the families $\tilde \pi:\widetilde {\mathcal X}\to \ell$ and $\pi':\mathcal X':=Y\times_k \ell\to \ell$.
Applying Theorem \ref{thm:degeneration:mixed:char:1} to the base change of these families  to the local ring of $\ell$ at $0$, we conclude that the geometric generic fibres $\widetilde X_{\overline \eta}$ and $Y\times \overline {k(\ell)}$ are not stably birational to each other. 
We thus conclude from \cite[Lemma 8]{Sch19} that also the very general fibres of $\tilde \pi$ and $\pi':\mathcal X'\to \ell$ are not stably birational to each other.
That is, the very general fibre of $\tilde \pi$ (which coincides with the very general fibre of $\pi$) is not stably birational to $Y$, as we want.
\end{proof}


\begin{proof}[Proof of Corollary \ref{cor:hypersurface}]
Assume that there is a hypersurface $Y\subset \CP^{n+1}_k$ of degree $d$ which does not admit a decomposition of the diagonal.
By the specialization map on Chow groups \cite[Proposition 11.1]{fulton},  we may assume that $Y$ is very general and so it is in particular smooth.
Hence, Corollary \ref{cor:hypersurface} follows from Theorem~\ref{thm:hypersurface:2}.
\end{proof}

\begin{proof}[Proof of Corollary \ref{cor:hypersurface:char_neq_2}]
Passing from $k$ to its algebraic closure, we may assume that $k$ is an uncountable algebraically closed field of characteristic different from two.
By \cite[Theorem 8.1]{Sch18}, a very general hypersurface $X\subset \CP_k^{n+1}$ of degree $d\geq \log_2(n)+2$ and dimension $n\geq 3$ does not admit a decomposition of the diagonal and so the result follows from Corollary \ref{cor:hypersurface}.
\end{proof}

\begin{proof}[Proof of Corollary \ref{cor:hypersurface:char=2}]
Passing from $k$ to its algebraic closure, we may assume that $k$ is an uncountable algebraically closed field of characteristic two. 
By \cite[Corollary 1.2]{Sch-torsion}, a very general hypersurface $X\subset \CP_k^{n+1}$ of degree $d\geq \log_2(n)+3$ does not admit a decomposition of the diagonal and so the result follows from Corollary \ref{cor:hypersurface}.
\end{proof}


\begin{theorem} \label{thm:ex:Q(t)}
Let $k=\Q(t)$ or $k=\F_p(s,t)$.
There are smooth Fano hypersurfaces $X$ and $X'$ over $k$ of the same dimension and degree, which over the algebraic closure of $k$ are neither stably rational, nor stably birational to each other.
Moreover, if $k$ is not of characteristic two, the slopes $\frac{\deg X}{\dim X+1}$ of these examples may be chosen arbitrarily small.
\end{theorem}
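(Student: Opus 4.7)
The plan is to produce $X$ and $X'$ as generic members of two suitable pencils defined over small subfields of $k$, and to deduce the required properties by applying Corollary~\ref{cor:degeneration:main} after base change to $\bar k$, the (uncountable) algebraic closure of $k$. The slope clause is immediate: since the degree bound coming from \cite{Sch18} (resp.\ \cite{Sch-torsion} in characteristic two) is only logarithmic in the dimension $n$, we may take $n$ arbitrarily large so that $d/(n+1)$ is smaller than any prescribed $\epsilon > 0$.

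For the construction, set $k_0 := \Q$ if $k = \Q(t)$, and $k_0 := \F_p$ if $k = \F_p(s,t)$. In the positive-characteristic case I fix a general pencil $\ell$ over $k_0$ in the linear series $\CP(H^0(\CP^{n+1}_{k_0}, \OO(d)))$ containing a general arrangement of $d$ hyperplanes, and take $X$ (resp.\ $X'$) to be the fiber of $\ell$ over the $k$-point corresponding to $t \in k$ (resp.\ $s \in k$). In the characteristic zero case I fix two sufficiently general pencils $\ell_1, \ell_2$ over $k_0 = \Q$, each containing a general hyperplane arrangement, and take $X, X'$ to be their generic members, i.e.\ the fibers over the $k$-point $t$. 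In either case $X$ and $X'$ are smooth Fano hypersurfaces of the same dimension $n$ and degree $d$ defined over $k$.

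The non-stable-rationality of $X_{\bar k}, X'_{\bar k}$ and their mutual non-stable-birationality will follow by applying Corollary~\ref{cor:degeneration:main} to the base change(s) of the pencil(s) to $\bar k$: as in the proof of Theorem~\ref{thm:hypersurface:3}, Shinder's explicit resolution of the hyperplane arrangement yields a simple normal crossings special fiber with universally trivial Chow group of zero-cycles, while by \cite{Sch18} (resp.\ \cite{Sch-torsion}) the very general $\bar k$-fiber of the pencil does not admit a decomposition of the diagonal. The crucial observation is that the associated bad loci --- a countable union of proper closed subsets of $\ell_{\bar k}$ (for "admits a decomposition of the diagonal") and of $\ell_{1,\bar k} \times \ell_{2,\bar k}$ or $\ell_{\bar k} \times \ell_{\bar k}$ (for "stably birational pair") --- are defined over $\overline{k_0}$, being geometric invariants of families over $k_0$. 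In the positive-characteristic case, the $\bar k$-point $(t, s)$ has coordinates algebraically independent over $\overline{k_0} = \overline{\F_p}$, so it avoids every proper closed subset defined over $\overline{k_0}$; this directly identifies $X_{\bar k}$ and $X'_{\bar k}$ with two very general fibers of $\ell_{\bar k}$ and concludes the argument.

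In the characteristic zero case, only $t$ itself is transcendental over $\overline{k_0} = \overline{\Q}$, which already suffices to confirm that $X_{\bar k}$ and $X'_{\bar k}$ are individually not stably rational. The main obstacle is to show mutual non-stable-birationality: one must ensure that the diagonal point $(t,t) \in \ell_{1,\bar k} \times \ell_{2,\bar k}$ avoids the stably-birational bad locus, despite the absence of a second independent transcendental. I would handle this by a genericity argument for the pair $(\ell_1, \ell_2)$: if one chooses the two $\Q$-defined pencils sufficiently general (a Hilbert-irreducibility-type condition in the moduli of pencil pairs), no irreducible component of the bad locus can contain the diagonal, because such containment would amount to "pointwise stable birational equivalence" between two otherwise unrelated pencils, which is a non-generic relation. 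Once this is arranged, the intersection of the bad locus with the diagonal is a countable $\overline{\Q}$-rational subset of $\CP^1_{\bar k}$, which is avoided by the transcendental point $t$.
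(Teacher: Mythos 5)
Your proposal starts from a mistaken premise that undermines the whole construction: for both $k=\Q(t)$ and $k=\F_p(s,t)$ the algebraic closure $\bar k$ is \emph{countable} (it is the algebraic closure of a countable field), so it is not legitimate to ``apply Corollary~\ref{cor:degeneration:main} after base change to $\bar k$, the (uncountable) algebraic closure of $k$.'' Corollary~\ref{cor:degeneration:main} is stated for uncountable algebraically closed fields precisely because its proof relies on the existence of very general points, which $\bar k$ may not possess. You seem partly aware of the difficulty, since you then switch to an argument based on transcendence degree over $\overline{k_0}$; this is exactly the right instinct, but it exposes where the approach actually fails.

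In the positive-characteristic case, the transcendence argument could be made to work, but it requires a substantive extra input that you leave unproved: you need to know that the ``stably birational'' bad locus in $\ell\times\ell$ is a \emph{countable union of proper closed subsets, defined over} $\overline{\F_p}$, not just a countable union of constructible sets. This is essentially the content (and the difficulty) of results like \cite[Lemma 8]{Sch19}, which you would need to redo carefully over the prime field. More seriously, in characteristic zero your argument collapses: $t$ gives you only one transcendental over $\overline\Q$, so you have to avoid a bad locus in the two-dimensional $\ell_1\times\ell_2$ with only transcendence degree one at your disposal. Your proposed fix --- a ``Hilbert-irreducibility-type condition'' guaranteeing that the diagonal avoids the bad locus for generic pencils $(\ell_1,\ell_2)$ --- is the crux, and the reasoning you offer for it (``such containment would amount to pointwise stable birational equivalence between two otherwise unrelated pencils, which is a non-generic relation'') is circular: it assumes the very conclusion you are trying to establish. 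Finally, your parenthetical that \cite{Sch-torsion} gives the slope bound in characteristic two overreaches: the theorem only claims the small-slope clause away from characteristic two, and \cite{Sch-torsion} does not supply an \emph{explicit} counterexample hypersurface over the small field $\F_2(s)$ needed here; the paper instead falls back on Totaro's explicit construction, whose degree bound is linear in $n$.

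The paper's proof takes a genuinely different route that sidesteps all of these issues. It produces \emph{explicit} families over the DVR $R=\overline F[t]_{(t)}$ with $F=\Q$ or $\F_p(s)$: $\mathcal X=\{tf+l_1\cdots l_d=0\}$ and $\mathcal X'=\{f+tg=0\}$, where $Z=\{f=0\}$ is a smooth Fano hypersurface over $F$ (from \cite[Theorem 8.3]{Sch18}, resp.\ \cite{totaro} in characteristic two) whose base change to $\overline F$ does not admit a decomposition of the diagonal. Both geometric generic fibres specialize to $Z_{\overline F}$ (one via $t\to 0$, the other via $t^{-1}\to 0$), which settles individual stable irrationality; $\mathcal X'$ is smooth over $R$ with $X'_0\cong Z_{\overline F}$, while $\mathcal X$ admits a strictly semi-stable modification $\widetilde{\mathcal X}$ whose special fibre is universally $\CH_0$-trivial (via Shinder's resolution of the hyperplane arrangement). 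Then Theorem~\ref{thm:degeneration:mixed:char:1} applies \emph{directly} to show $X_{\overline\eta}$ and $X'_{\overline\eta}$ are not stably birational. No uncountability, no very-general machinery, and no bad-locus analysis are needed --- the point is to engineer a single degeneration where the theorem fires, rather than to show that a given pair of points avoids a countable set of bad conditions.
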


\begin{proof}
We may write $k=F(t)$, where $F=\Q$ or $F=\F_p(s)$.
Let us first deal with the case where $k$ has characteristic different from two.
Then \cite[Theorem 8.3]{Sch18} implies that there is a smooth projective Fano hypersurface $Z$ over $F$ of arbitrarily small slope whose base change to the algebraic closure $\overline F$ of $F$ does not admit a decomposition of the diagonal.
We may write $Z=\{f=0\}$ for some irreducible homogeneous polynomial $f\in F[x_0,\dots ,x_{n+1}]$ of degree $d$. (It is possible to deduce an explicit description of $f$ from \cite{Sch18}.)

Consider the discrete valuation ring $R:=\overline{F}[t]_{(t)}$ whose residue field is the algebraic closure of $F$. 
We then define the flat projective $R$-schemes
$$
\mathcal X:=\{tf+l_1\cdots l_d=0\} \ \ \text{and}\ \ \mathcal X':= \{f+tg=0\},
$$
where $g\in F[x_0,\dots ,x_{n+1}]$ is general with $\deg g=d$ and $l_1\cdots l_d$ is a product of general linear polynomials $l_i\in F[x_0,\dots ,x_{n+1}]$.
Note that the generic fibres $X_\eta$ and $X'_\eta$ can be defined over $k$.
Moreover, the geometric generic fibres $X_{\overline \eta}$ and $X'_{\overline \eta}$ are smooth and do not admit decompositions of the diagonal, because they both specialize (via $t^{-1}\to 0$, resp.\ $t\to 0$) to $Z_{\overline F}$, which is smooth and does not admit a decomposition of the diagonal.
In particular, $X_{\overline \eta}$ and $X'_{\overline \eta}$ are stably irrational; it remains to show that they are not stably birational to each other.

As in the proof of Theorem \ref{thm:hypersurface:2}, Shinder's computation in \cite[Lemma 3.6]{Sh19} shows that there is a projective modification $\tilde {\mathcal X}\to \mathcal X$, such that $\widetilde{\mathcal X}$ is strictly semi-stable over $R$ and such that $\tilde X_0$ has universally trivial Chow group of zero-cycles.
Since $g$ is general, $\mathcal X'$ is smooth over $R$.
Moreover, $X'_0\cong Z_{\overline F}$ does not admit a decomposition of the diagonal.
Applying Theorem \ref{thm:degeneration:mixed:char:1}, we thus find that $X_{\overline \eta}$ and $X'_{\overline \eta}$ are not stably birational to each other.
This concludes the case where $k$ has characteristic different from two.

If $\operatorname{char} k=2$, then we may argue similarly by using Totaro's result \cite{totaro}, which produces a smooth projective hypersurface $Z\subset \CP^{n+1}_{\F_2(s)}$ of any even degree $d\geq 2\lceil \frac{n+2}{3}\rceil$ which does not admit a decomposition of the diagonal over the algebraic closure of $\F_2(s)$.
The rest of the argument is analogous to the one given above. 
\end{proof}


\section{Questions}

Let $H_{d,n}$ denote the coarse moduli space of smooth hypersurfaces of degree $d$ in $\CP^{n+1}$.
Stable birational equivalence induces an equivalence relation $\sim$ on the set of $k$-rational points $H_{d,n}(k)$ of this moduli space and we can consider the quotient $H_{d,n}(k)/\sim$.
A priori, this quotient is only a set, but we can nonetheless study its size in various ways.
For instance, we may define the dimension of $H_{d,n}(k)/\sim$ as 
$$
\min\{\dim Z\mid Z\subset H_{d,n}(k)\ \text{is closed and}\ Z\to H_{d,n}(k)/\sim\ \text{is surjective}\}.
$$

By Theorem \ref{thm:hypersurface:3}, we know that for any uncountable algebraically closed field $k$ such that the very general element of $H_{d,n}(k)$ does not admit a decomposition of the diagonal, the set $H_{d,n}(k)/\sim$ is uncountable. (By \cite{Sch18}, this holds for instance if $d\geq \log_2(n) +2$, $n\geq 3$ and $\operatorname{char}(k)\neq 2$.)
In particular, $H_{d,n}(k)/\sim$ is positive-dimensional in these cases and it is natural to ask to improve this result.

\begin{question}
What can we say about the dimension of $H_{d,n}(k)/\sim$?
\end{question}

A related question that goes back to Voisin asks about the dimension of the fibres of the quotient map $H_{d,n}(k)\to H_{d,n}(k)/\sim$.

\begin{question}[Voisin]
What is the maximal dimension of a subvariety $Z\subset H_{d,n}(k)$ 
such that all hypersurfaces parametrized by $Z$ are stably birational to each other?
\end{question}

In light of Theorem \ref{thm:ex:Q(t)}, it is also worth to ask the following.

\begin{question}
Are all smooth Fano hypersurfaces of given dimension and degree over $\Q$ stably birational to each other over $\overline \Q$?
\end{question}

\section*{Acknowledgements}   
I am grateful to the excellent referees for  corrections and comments which significantly improved the exposition, and to C.\ Liedtke for pointing out the reference \cite{hartl}.


\clearpage

\vspace{0.5cm}

\bibliographymark{References}

\providecommand{\bysame}{\leavevmode\hbox to3em{\hrulefill}\thinspace}
\providecommand{\arXiv}[2][]{\href{https://arxiv.org/abs/#2}{arXiv:#1#2}}
\providecommand{\MR}{\relax\ifhmode\unskip\space\fi MR }
\providecommand{\MRhref}[2]{%
  \href{http://www.ams.org/mathscinet-getitem?mr=#1}{#2}
}
\providecommand{\href}[2]{#2}

\end{document}